\numberwithin{equation}{section}
\newtheorem{theorem}{Theorem}[section]
\newtheorem{corollary}[theorem]{Corollary}
\theoremstyle{definition}
\newtheorem{definition}[theorem]{Definition}
\newtheorem{example}[theorem]{Example}
\newtheorem{remark}[theorem]{Remark}
\begin{document}

\title[Wavelet characterization and extension operators]{Applications of extrapolations 
to wavelet characterization
of various function spaces and extension operators}

\author{Mitsuo Izuki, Takahiro Noi and Yoshihiro Sawano}
\address{
Faculty of Liberal Arts and Sciences\\
Tokyo City University\\
1-28-1 Tamazutsumi, Setagaya-ku, Tokyo 158-8557, Japan
}
\email{izuki@tcu.ac.jp}

\address{
Department of Mathematical and Data Science\\
Otemon Gakuin University\\
2-1-15 Nishiai, Ibaraki, Osaka 567-8502, Japan
}
\email{taka.noi.hiro@gmail.com}

\address{
Department of Mathematics\\
Graduate School of Science and Engineering\\
Chuo University\\
1-13-27 Kasuga, Bunkyo-ku, Tokyo, Japan
}
\email{yoshihiro-sawano@celery.ocn.ne.jp}
\maketitle

\begin{abstract}
The aim of this paper is to apply an extrapolation result without relying on convexification. 
We characterize ball Banach function spaces in terms of wavelets, 
formulated in a way that takes into account the smoothness properties of the spaces under consideration. 
The same technique can also be applied to prove vector-valued inequalities, for example. 
Furthermore, the result presented here refines a recent extension operator result by Zhu, Yang, and Yuan.
\end{abstract}

{\bf 2020 Classification} 42B35, 41A17, 26B33

{\bf Keywords} extrapolation, wavelet,
Riesz transform, Hardy--Littlewood maximal operator,
Muckenhoupt weight

\section{Introduction}

Wavelets were originally defined as functions that form an orthonormal basis in the $L^2$ space, and their mathematical theory has seen significant development. Nowadays, various types of wavelets are known, possessing desirable properties such as smoothness, rapid decay, compact support, and band-limitedness. By leveraging these properties appropriately, it is possible to characterize function spaces via alternative norms and to construct bases with excellent properties across a wide range of spaces beyond just $L^2$.

The wavelet characterization of function spaces is a fascinating subject in real analysis and applied mathematics. In particular, the interplay between weights and wavelets has been extensively developed. Such characterizations are especially useful when analyzing the boundedness of operators. Here and below,
by a ``weight", we mean a non-negative measurable function that is positive almost everywhere.

In this paper, we explore the connection between wavelets and an extrapolation result for ball Banach function spaces. Our assumptions are stated entirely in terms of the ball Banach function space and its K\"{o}the dual. As an application, we obtain wavelet characterizations of various function spaces.

As mentioned above, wavelet characterizations of function spaces can be effectively used to investigate the boundedness properties of operators. In recent years, the use of weights has become extremely useful in the analysis of operators arising in harmonic analysis. 

The study of weighted function spaces has a rich history, grounded in Muckenhoupt's theory in real analysis. For an overview, see \cite[Introduction]{ABM}. In particular, weighted Lebesgue spaces have been characterized via the boundedness of integral operators; see, for example, \cite{GM2001, Le}. This method extends to more general function spaces, including weighted Lebesgue spaces with variable exponent \cite{INS-2015} and Herz-type spaces \cite{IzukiAnal2010, Tachi}.

Furthermore, the wavelet characterization of Herz-type spaces has been achieved via the $\varphi$-transform \cite{HWY}. Another approach to the wavelet characterization of function spaces involves the use of extrapolation theorems. This method has been applied, for instance, in the setting of Lebesgue spaces with variable exponent \cite{IzukiGMJ, Kop}.

We recall the definition of ball Banach function spaces defined in \cite{shyy17}.
The space $L^0({\mathbb R}^n)$ denotes the space of all equivalence classes of 
measurable functions modulo null functions.
Let $\rho$ be a mapping which maps $L^0({\mathbb R}^n)$ to $[0,\infty]$.
Recall that it is called a {\it ball Banach function norm} (over ${\mathbb R}^n$) 
if, for all non-negative measurable functions $f, g, f_j$ $(j=1, 2, 3, \ldots)$, 
for all constants $a \ge 0$ and for all open balls $B$ in ${\mathbb R}^n$, 
the following properties hold:
\begin{enumerate}
\item[$({\rm P}1)$]
$\rho(f)=0\,\Leftrightarrow\,f=0$ a.e.;
$\rho(af)=a\rho(f)$,
$\rho(f+g)\le\rho(f)+\rho(g)$,
\item[$({\rm P}2)$]
$\rho(g)\le\rho(f)$
if
$0\le g\le f\text{ a.e.}$,
\item[$({\rm P}3)$]
the Fatou property;
$\rho(f_{j})\uparrow\rho(f)$
holds
whenever
$0\le f_{j}\uparrow f\text{ a.e.}$,
\index{Fatou property@Fatou property}
\item[$({\rm P}4)$]
$\rho(\chi_B)<\infty$,
\item[$({\rm P}5)$]
$\displaystyle
\|\chi_B f\|_{L^{1}} \lesssim\rho(f)$
with the implicit constant
depending on $B$ and $\rho$ but independent of $f$.
\end{enumerate}
\index{ball Banach function norm@ball Banach function norm}
\index{ball Banach function space@ball Banach function space}
Accordingly, the space generated by such $\rho$,
which is given by
\[
X=X_\rho=\{f \in L^0({\mathbb R}^n)\,:\,f
\mbox{
is a measurable function satisfying }
\|f\|_{X_\rho}:=\rho(|f|)<\infty\},
\] is called a ball
Banach function space.
Furthermore its \lq\lq associate norm"
$\rho'$ is defined for a non-negative measurable function $g$ by
$
\rho'(g)
:=
\sup\left\{
\|f \cdot g\|_{L^{1}}\,:\,
f\in X,
\rho(f)\le 1\right\}.
$
Likewise we can consider the K\"{o}the dual
$X'=(X_\rho)'=X_{\rho'}$
for Banach lattices.

We will use the Hardy--Littlewood maximal operator
$M$ to describe our assumption.
For $f \in L^0({\mathbb R}^n)$,
one
defines a function $M f$ by
$$\displaystyle
M f(x):=
\sup\limits_{r>0}\frac{1}{r^n}\int_{|y-x|<r} |f(y)|{\rm d}y
$$
for $x \in {\mathbb R}^n$.
The mapping $f \mapsto M f$ is called the Hardy--Littlewood maximal operator.

%Recently,
In \cite{Rutsky} Rutsky characterized the condition
for which the Riesz transforms, given by
\[
{\mathcal R}_{j} f(x):=
\lim\limits_{\varepsilon \downarrow 0}\int_{|y-x|>\varepsilon}
\frac{x_{j}-y_{j}}{|x-y|^{n+1}}f(y){\rm d}y
\quad (x \in {\mathbb R}^n),
\]
for $j=1,2,\ldots,n$,
are bounded on $X$.
In fact, he showed that $M$ is bounded on $X$ and on $X'$
if and only if ${\mathcal R}_j$ is bounded on $X$
for all $j=1,2,\ldots,n$.

As we mentioned, weights are fundamental tools to investigate
the boundedness of operators. Let $w$ be a weight
and $1<p<\infty$.
We also consider $L^p(w)$ which collects all
$f \in L^0({\mathbb R}^n)$
for which $\|f\|_{L^p(w)}:=\|f w^{\frac1p}\|_{L^p}$ is finite. 
Using the Muckenhoupt class $A_p$, which will be recalled in (\ref{eq:241226-111}) below, we begin by recalling an extrapolation theorem. We then extend this theorem to the case of the local Muckenhoupt class $A_{p,\mathrm{loc}}$.

Our main focus in this paper is on applications of these extrapolation results. This approach clearly demonstrates the fundamental role that weights play in analyzing the boundedness properties of operators.

\begin{theorem}\label{thm:241102-1a}
Let $X$ be a ball Banach function space and $1<p<\infty$.
Assume that
$M$ is bounded on $X$ and on $X'$.
For a given increasing function
$N:[1,\infty) \to (0,\infty)$,
write
\begin{equation}\label{eq:241222-1}
{\mathcal F}
:=
\{(f,g)\in L^0({\mathbb R}^n)^2\,:\,\|f\|_{L^p(w)} \le N([w]_{A_p})\|g\|_{L^p(w)}
\mbox{ for all }w \in A_p\}.
\end{equation}
Then
there exists a constant $C>0$ such that
$\|f\|_X \le C\|g\|_X$
for all $(f,g) \in {\mathcal F}$.
\end{theorem}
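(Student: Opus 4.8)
The plan is to reduce to a bilinear estimate by duality. Since $X$ is a ball Banach function space it satisfies $X=(X')'$ with equal norms (a consequence of the Fatou property $({\rm P}3)$), so $\|f\|_X=\sup\{\int_{\mathbb R^n}|f|\,h\,dx:\ h\ge 0,\ \|h\|_{X'}\le 1\}$, and it suffices to show $\int_{\mathbb R^n} f h\,dx\le C\|g\|_X$ for every such $h$. I may assume $f,g\ge 0$ and $0<\|g\|_X<\infty$, the degenerate cases being immediate from $\mathcal F$ and $({\rm P}1)$. Fixing such an $h$, and a weight $w\in A_p$ to be chosen with dual weight $\sigma=w^{1-p'}$, I would start from Hölder's inequality (note $w^{-p'/p}=\sigma$)
\[
\int_{\mathbb R^n} f h\,dx\le\Big(\int_{\mathbb R^n} f^p w\,dx\Big)^{1/p}\Big(\int_{\mathbb R^n} h^{p'}\sigma\,dx\Big)^{1/p'},
\]
and then feed in the hypothesis $(f,g)\in\mathcal F$ to replace $\int f^p w$ by $N([w]_{A_p})^p\int g^p w$. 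Everything then rests on choosing $w$ so that $[w]_{A_p}$ is bounded by a constant independent of $f,g,h$, while the two remaining integrals are each dominated by $\|g\|_X$.

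To build $w$ I would run two Rubio de Francia iterations, one adapted to $X$ and one to $X'$. Using that $M$ is bounded on $X$, set $u=\sum_{k\ge 0}(2\|M\|_{X\to X})^{-k}M^k g$; this gives the key pointwise bound $g\le u$, together with $\|u\|_X\le 2\|g\|_X$ and $u\in A_1$ with $[u]_{A_1}\le 2\|M\|_{X\to X}$. Using that $M$ is bounded on $X'$, set $v=\sum_{k\ge 0}(2\|M\|_{X'\to X'})^{-k}M^k h$, so that $h\le v$, $\|v\|_{X'}\le 2\|h\|_{X'}\le 2$, and $[v]_{A_1}\le 2\|M\|_{X'\to X'}$. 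I then take $w=v\,u^{1-p}$; the Jones-type factorization of $A_p$ weights yields $w\in A_p$ with $[w]_{A_p}\le[v]_{A_1}[u]_{A_1}^{p-1}$, a constant depending only on $p$ and the two operator norms, so $N([w]_{A_p})$ is a harmless fixed constant.

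The crucial point, and precisely what lets us dispense with any convexification of $X$, is that the two pointwise bounds annihilate the awkward powers $g^p$ and $h^{p'}$. Since $(1-p)(1-p')=1$ we have $\sigma=w^{1-p'}=v^{1-p'}u$, and because $p-1>0$ and $g\le u$ one gets $g^p u^{1-p}=g\,(g/u)^{p-1}\le g$; hence $\int g^p w=\int g^p v\,u^{1-p}\,dx\le\int g\,v\,dx\le\|g\|_X\|v\|_{X'}\le 2\|g\|_X$. Symmetrically $h^{p'}v^{1-p'}\le h$ gives $\int h^{p'}\sigma\,dx\le\int h\,u\,dx\le\|h\|_{X'}\|u\|_X\le 2\|g\|_X$. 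Substituting both into the Hölder bound and using $1/p+1/p'=1$ produces $\int f h\,dx\le N([w]_{A_p})\,(2\|g\|_X)^{1/p}(2\|g\|_X)^{1/p'}=2N([w]_{A_p})\|g\|_X$, and taking the supremum over admissible $h$ finishes the proof.

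The genuine obstacle is the simultaneous control of $[w]_{A_p}$ and of the two weighted integrals: the three requirements pull in opposite directions, and bounding $\int g^p w$ by $\|g\|_X^p$ directly is exactly where the standard argument is forced to pass to the $p$-convexification $X^{1/p}$. Reconciling them here hinges on the choice $w=v\,u^{1-p}$ assembled from the two separate Rubio de Francia envelopes $u$ (in $X$) and $v$ (in $X'$), which keeps $g$ and $h$ at first power in the $X$- and $X'$-norms; I expect the only points needing care to be the convergence of the two iterations in $X$ and $X'$ and the verification of the $A_p$ factorization constant, both of which are routine.
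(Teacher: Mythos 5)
Your proof is correct, and it takes a genuinely different route from the paper's, even though both arguments dualize through $X=(X')'$ and both rest on Rubio de Francia iteration. The paper builds its $X$-side envelope from $f+g$ (not from $g$ alone), splits $\int_{{\mathbb R}^n} f R'_h\,{\rm d}x$ pointwise by the Young-type inequality $ab^{p-1}\le C_\varepsilon a^p+\varepsilon b^p$, and must then control the error term $\varepsilon\int_{{\mathbb R}^n} R_{g+f}R'_h\,{\rm d}x\lesssim\varepsilon(\|f\|_X+\|g\|_X)$; this requires the quantitative weighted bound for $M$ on $L^p(W)$, $W\in A_p$, summed along the iterates, and the leftover $\varepsilon\|f\|_X$ has to be absorbed into the left-hand side, which in turn forces the preliminary truncation $\chi_{B(r)}\chi_{[0,r]}(|f|)f$ and the limiting argument at the start of Section \ref{s2}. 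You replace the Young step by H\"{o}lder's inequality with respect to the factorized weight $w=vu^{1-p}$, where $u$ is the envelope of $g$ in $X$ and $v$ the envelope of $h$ in $X'$; then the only term involving $f$ is $\int_{{\mathbb R}^n} f^p w\,{\rm d}x$, which the hypothesis $(f,g)\in{\mathcal F}$ converts directly into $\int_{{\mathbb R}^n} g^p w\,{\rm d}x$, and the pointwise bounds $g^pu^{1-p}\le g$ and $h^{p'}v^{1-p'}\le h$ finish the proof. What this buys is substantial: no absorption, no truncation, and no weighted maximal inequality at all --- the only inputs are the elementary facts $[u]_{A_1}\le 2\|M\|_{X\to X}$, $[v]_{A_1}\le 2\|M\|_{X'\to X'}$, $[vu^{1-p}]_{A_p}\le[v]_{A_1}([u]_{A_1})^{p-1}$, and the Lorentz--Luxemburg identity $X=(X')'$ (which the paper also invokes). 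In effect you have transplanted the classical two-algorithm extrapolation scheme to ball Banach function spaces and shown that the paper's goal of avoiding convexification can be reached even more economically. Two small points you should make explicit: one may assume $g\not\equiv 0$ and $h\not\equiv 0$, so that $u$ and $v$ are strictly positive everywhere (via $Mg>0$, $Mh>0$) and finite a.e. (they are locally integrable, since $u\in X$, $v\in X'$ with finite norm), hence genuine weights; without this, $w=vu^{1-p}$ and the $A_1$ conditions are not meaningful.
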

The proof of Theorem~\ref{thm:241102-1a} is known; see
\cite[Theorem 10.1]{C17},
\cite[Theorem 3.1]{CMM22}
and
\cite[Theorem~A, Theorem 4.7 and Remark 4.8]{Zoe}.
This theorem is a special case of~\cite[Theorem~A]{Zoe} with
$X = Y$, $r_1 = r_2 = 1$, and $s_1 = s_2 = \infty$.
For clarity and convenience, we restate Theorem~\ref{thm:241102-1a}
in a more accessible form below.

\begin{corollary}\label{cor:241222-1}
Let $X$ be a Banach function space and $1<p<\infty$.
Then the following are equivalent{\rm:}
\begin{enumerate}
\item[$(A)$]
$M$ is bounded on $X$ and on $X'$.
\item[$(B)$]
The Riesz transform ${\mathcal R}_j$ is bounded on $X$
for each $j=1,2,\ldots,n$.
\item[$(C)$]
Define ${\mathcal F}$ by $(\ref{eq:241222-1})$.
Then
there exists a constant $C>0$ such that
$\|f\|_X \le C\|g\|_X$
for all $(f,g) \in {\mathcal F}$.
\end{enumerate}
\end{corollary}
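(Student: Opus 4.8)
The plan is to prove the three conditions equivalent by closing the cycle $(A) \Rightarrow (C) \Rightarrow (B) \Rightarrow (A)$, drawing on the two results already in hand. The equivalence $(A) \Leftrightarrow (B)$ is exactly Rutsky's characterization from \cite{Rutsky}: $M$ is bounded on $X$ and on $X'$ if and only if every ${\mathcal R}_j$ is bounded on $X$; in particular this supplies $(B) \Rightarrow (A)$. The implication $(A) \Rightarrow (C)$ is the content of Theorem \ref{thm:241102-1a}, since once $M$ is bounded on $X$ and on $X'$ the extrapolation conclusion holds for the family ${\mathcal F}$ attached to any increasing $N$. Consequently the only step demanding a fresh argument is $(C) \Rightarrow (B)$, which converts the abstract extrapolation estimate back into boundedness of a concrete operator.

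For $(C) \Rightarrow (B)$ I would invoke the classical quantitative $A_p$ theory for the Riesz transforms. For each $j$ and each $1<p<\infty$ there is an increasing function $N_j\colon[1,\infty)\to(0,\infty)$ with
\[
\|{\mathcal R}_j h\|_{L^p(w)}\le N_j([w]_{A_p})\,\|h\|_{L^p(w)}
\]
for all $w\in A_p$ and all $h$ in a dense class; this is the standard weighted bound for Calder\'on--Zygmund operators, and the sharp linear $A_2$ dependence is available although only monotonicity in $[w]_{A_p}$ is needed. With $N=N_j$ in $(\ref{eq:241222-1})$, every pair $({\mathcal R}_j h,h)$ then lies in the associated family ${\mathcal F}$, so applying hypothesis $(C)$ with this choice of $N$ gives $\|{\mathcal R}_j h\|_X\le C\|h\|_X$. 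Extending from the dense class by a density and Fatou argument using $({\rm P}3)$ yields boundedness of ${\mathcal R}_j$ on $X$, which is $(B)$.

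The main obstacle is precisely the matching step in $(C)\Rightarrow(B)$: one must read $(C)$ as quantified over all admissible increasing $N$ (exactly as in Theorem \ref{thm:241102-1a}), so that the specific weight-growth function $N_j$ produced by the Riesz transform is a legitimate input, and one must check that the family it generates is of the same shape to which $(C)$ applies. Securing this realization of the operator as a member of some ${\mathcal F}$ is what lets the purely abstract conclusion $(C)$ detect the concrete operator ${\mathcal R}_j$; the remaining density passage and the appeal to Rutsky's theorem are then routine, and the cycle closes.
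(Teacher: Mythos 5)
Your proposal is correct and takes essentially the same route as the paper: the paper also closes the cycle by citing Rutsky for $(A)\Leftrightarrow(B)$, Theorem \ref{thm:241102-1a} for $(A)\Rightarrow(C)$, and the quantitative weighted bound for the Riesz transforms --- realized there with the specific linear choice $N(t)=\alpha t$, $\alpha\gg1$ --- to conclude $({\mathcal R}_j f,f)\in{\mathcal F}$ and hence $(C)\Rightarrow(B)$. The only cosmetic difference is that you work with a generic increasing majorant $N_j$ (plus a density remark) where the paper simply fixes the linear one.
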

Corollary~\ref{cor:241222-1} is also a known result. 
See \cite{Nieraeth} for more equivalent conditions.
Nevertheless, we briefly recall its proof for completeness.

Corollary~\ref{cor:241222-1} follows directly from a combination of a known equivalence and Theorem~\ref{thm:241102-1a}. 
As previously mentioned, Rutsky established the equivalence between conditions~$(A)$ and~$(B)$. 
Theorem~\ref{thm:241102-1a} asserts that~$(A)$ implies~$(C)$. 
Finally, it is known that $({\mathcal R}_j f, f) \in {\mathcal F}$ for $N(t) = \alpha t^{\max(p,p')}$ with sufficiently large $\alpha$; see~\cite[Theorem~305]{book},
for example. 
This yields the implication $(C) \Rightarrow (B)$, completing the proof.

We have an analogy to local  Hardy--Littlewood maximal operator
$M_{\rm loc}$.
For $f \in L^0({\mathbb R}^n)$,
one
defines a function $M_{\rm loc} f$ by
$$\displaystyle
M_{\rm loc} f(x):=
\sup\limits_{0<r<1}\frac{1}{r^n}\int_{|y-x|<r} |f(y)|{\rm d}y
$$
for $x \in {\mathbb R}^n$.
The mapping $f \mapsto M_{\rm loc} f$ is called the local Hardy--Littlewood maximal operator.
%Motivated by Theorem \ref{thm:241102-1a},
%we prove the following theorem as the starting point of this paper.
\begin{theorem}\label{thm:241102-1ab}
Let $X$ be a ball Banach function space and $1<p<\infty$.
Assume that
$M_{\rm loc}$ is bounded on $X$ and on $X'$.
For a given increasing function
$N:[1,\infty) \to (0,\infty)$,
write
\begin{equation}\label{eq:241222-1k}
{\mathcal F}_{\rm loc}
:=
\{(f,g)\in L^0({\mathbb R}^n)^2\,:\,\|f\|_{L^p(w)} \le N([w]_{A_{p,\rm loc}})\|g\|_{L^p(w)}
\mbox{ for all }w \in A_{p,\rm loc}\}.
\end{equation}
Then
there exists a constant $C>0$ such that
$\|f\|_X \le C\|g\|_X$
for all $(f,g) \in {\mathcal F}_{\rm loc}$.
\end{theorem}
This result is also known:
see \cite[Theorem 3.1]{CMM22}.
For readers' convinience,
we provide a short proof of Theorem \ref{thm:241102-1ab}
as an appendix in Section \ref{s2}.
Here, we make a clarifying remark on the strength of
the assumption in Theorems \ref{thm:241102-1a}
and \ref{thm:241102-1ab} regarding
$X$.
\begin{remark}\label{rem:241226-1} If $X$ satisfies the assumption in Theorem
\ref{thm:241102-1a}, meaning that $M$ is bounded on both $X$
and $X'$, then according to Lerner and P\'{e}rez
\cite{Lerner07}, there exists a constant $\eta > 1$ such
that the operator $f \mapsto (M[|f|^\eta])^{\frac{1}{\eta}}$
is bounded on both $X$ and $X'$. In this scenario, we can
establish $X \hookrightarrow L^\eta(w)$ for some $w \in A_1$.
We refer
to \cite[Lemma 2.7]{MaSa25}.
See also \cite[Lemma 3]{Sawano2024}. 
An analogy to $M_{\rm loc}$ is available to
Theorem \ref{thm:241102-1ab}.
\end{remark}

Curbera, Garc\'{i}a-Cuerva, Martell, and P\'{e}rez made a 
significant breakthrough by obtaining the extrapolation 
theorem for rearrangement invariant Banach function spaces 
\cite{CGMP06}. The case where $X$ is not rearrangement 
invariant has gained increasing attention. For instance, in 
\cite[Theorem 3.3]{ho12} and \cite[Theorem 3.1]{DSL23}, Ho employed $p$-convexification to 
achieve a vector-valued extension of the maximal inequality. 
For a ball Banach function space $X$ and $p>0$, we consider 
its {\it $p$-convexification given by} 
$X^p:=\{f \in L^0({\mathbb R}^n)\,:\,|f|^{\frac1p} \in X\}$. 
The norm of $X^p$ is given by 
$\|f\|_{X^p}:=(\|\,|f|^{\frac1p}\,\|_{X})^p$. Remarkably, 
Theorems \ref{thm:241102-1a}
and \ref{thm:241102-1ab} do not require $p$-convexification.
Nor do we need the absolute continuity of the norm
of $X$.
Recall that $X$ is said to have the absolutely continuous norm
if $\|f\chi_{E_j}\|_X \to 0$ as $j \to \infty$
for any $f \in X$
and
any sequence $\{E_j\}_{j=1}^\infty$ of sets decreasing to $\emptyset$.
If the space $X$ is separable,
then $X$ has the absolutely continuous norm.
See \cite{BS}.

Here and below,
in addition to the above notation, 
we use the following convention in this paper.
\begin{itemize}
\item
By a \lq \lq cube" we mean a compact cube
whose edges are parallel to the {\it coordinate axes}.
The metric closed ball defined 
by $\ell^\infty$ is called a {\it cube}.
If a cube has center $x$ and radius $r$,
we denote it by $Q(x,r)$.
Namely, we write
\[
Q(x,r) \equiv
\left\{y=(y_{1},y_{2},\ldots,y_n) \in {\mathbb R}^n\,:\,
\max\limits_{j=1,2,\ldots,n}|x_{j}-y_{j}| \le r\right\}
\]
when $x=(x_{1},x_{2},\ldots,x_n) \in {\mathbb R}^n$
and $r>0$.
The symbol ${\mathcal Q}$ stands for all cubes.
\item
Given $x\in \mathbb{R}^n$ and $r>0$, we denote the
open ball centered at $x$ and having radius $r>0$ by
$B(x,r)$.
We abbreviate
$B(r):=B(0,r)$.
\item
Let $A,B \ge 0$.
Then $A \lesssim B$ means
that there exists a constant $C>0$
such that $A \le C B$,
where $C$ is usually independent of the functions we are considering.
The symbol $A \sim B$ stands for the two-sided inequality
$A \lesssim B \lesssim A$.
\item 
Let $l \in \mathbb{N}$.
We denote by $M^{l}$ the $l$-fold composition of the Hardy-Littlewood maximal operator $M$.
We write $M^0$ for the operator
\[
f \in L^0(\mathbb{R}^n) \mapsto |f| \in L^0(\mathbb{R}^n).
\]
The symbol $M^l_{\rm loc}$ corresponds to the one for the local
Hardy-Littlewood maximal operator $M_{\rm loc}$.
\item 
Given a measurable set $E \subset \mathbb{R}^n$, the characteristic function of $E$ is denoted by $\chi_E$.
We write $|E|$ for the Lebesgue measure of $E$.

\item 
The space $L^\infty_{\mathrm{c}}(\mathbb{R}^n)$ denotes the set of bounded and compactly supported measurable functions on $\mathbb{R}^n$.

\item 
For a measurable set $E \subset \mathbb{R}^n$ and a weight $w$, the notation
\[
w(E) := \int_E w(x)\,\mathrm{d}x
\]
stands for the integral of $w$ over $E$.

\item 
A weight is a measurable function $w:\mathbb{R}^n \to [0,\infty]$ that is positive and finite almost everywhere.

A locally integrable weight $w$ belongs to the \textit{$A_1$-class} (or is an \textit{$A_1$-weight}) if $0 < w < \infty$ almost everywhere and
\[
[w]_{A_1} := \left\| \frac{M w}{w} \right\|_{L^\infty} < \infty.
\]
The local $A_1$ constant is defined similarly, with $M$ replaced by a localized maximal operator $M_{{\rm loc}}$:
\[
[w]_{A_{1,\text{loc}}} := \left\| \frac{M_{{\rm loc}} w}{w} \right\|_{L^\infty} < \infty.
\]

\item 
Let $1 < p < \infty$. A locally integrable weight $w$ belongs to the \textit{$A_p$-class} (or is an \textit{$A_p$-weight}) if $0 < w < \infty$ almost everywhere and
\begin{equation}\label{eq:241226-111}
[w]_{A_p} := \sup_{Q \in \mathcal{Q}} \left( \frac{1}{|Q|} \int_Q w(x)\,\mathrm{d}x \right) \left( \frac{1}{|Q|} \int_Q w(x)^{-\frac{1}{p-1}}\,\mathrm{d}x \right)^{p-1} < \infty,
\end{equation}
where $\mathcal{Q}$ is the collection of all cubes in $\mathbb{R}^n$ with sides parallel to the coordinate axes.

The local $A_p$ constant is defined by restricting the supremum in \eqref{eq:241226-111} to cubes with side length at most $1$:
\[
[w]_{A_{p,\rm loc}} := \sup_{\substack{Q \in \mathcal{Q} \\ \ell(Q) \leq 1}} \left( \frac{1}{|Q|} \int_Q w(x)\,\mathrm{d}x \right) \left( \frac{1}{|Q|} \int_Q w(x)^{-\frac{1}{p-1}}\,\mathrm{d}x \right)^{p-1}.
\]

\item 
We denote by $\|M\|_{X \to X}$ the operator norm of the 
Hardy-Littlewood maximal operator $M$ acting on a function space $X$. The operator norm $\|M_{{\rm loc}}\|_{X \to X}$ is defined analogously for the localized maximal operator $M_{{\rm loc}}$.

\item
Given two complex-valued functions $f, \, g$,
we formally write the $L^2$-inner product by
\[
\langle f,g \rangle:=\int_{\mathbb{R}^n}
f(x)\overline{g(x)} \, {\rm d}x .
\]
\end{itemize}

Here is the organization of this paper.
Section \ref{s3} is oriented to applications of 
Theorems \ref{thm:241102-1a}
and \ref{thm:241102-1ab}.
We obtain the boundedness properties of operators and the wavelet characterization
in Section \ref{s3}.
We also take up the vector-valued inequality of $M$ and the extension operator in Section \ref{s3}.
We will see that our method allows us to include more function spaces
than are dealt with in \cite{ZYY24}.
We take up various function spaces in Section \ref{s5}.
We compare what we can say from 
Theorems \ref{thm:241102-1a}
and \ref{thm:241102-1ab}
with known results
in Sections \ref{s3} and \ref{s5}.
Finally, Section \ref{s2} gives a short proof of Theorem \ref{thm:241102-1ab}.

\section{Main results: applications of the extrapolation}
\label{s3}

We present applications.
To this end,  we introduce some terminology.
We consider $X$-based Sobolev spaces over domains. 
First we recall the one over ${\mathbb R}^n$.
Let $s>0$. Recall that the Bessel potential $(1-\Delta)^{\frac{s}{2}}:={\mathcal F}^{-1}
[(1+|\cdot|^2)^{\frac{s}{2}}\mathcal{ F}f]$ is an 
isomorphism in ${\mathcal S}'({\mathbb R}^n)$; see \cite[p. 251]{Sawano2018}. 
\begin{definition}
Let $s>0$. Define the $X$-based Sobolev space 
$W^s_X({\mathbb R}^n)$ of order $s$ to be the set of all $f \in X({\mathbb R}^n)$ 
for which $(1-\Delta)^{\frac{s}{2}}f$ is represented by an element in $X$. 
Equip $W^s_X({\mathbb R}^n)$ with the norm given by 
$\|f\|_{W^s_X}:=\|(1-\Delta)^{\frac{s}{2}}f\|_X$ for all $f \in W^s_X({\mathbb R}^n)$. 
\end{definition}
Assume that $M$ is bounded on $X$ and on $X'$. Recall that the inverse operator 
$(1-\Delta)^{-\frac{s}{2}}$ of $(1-\Delta)^{\frac{s}{2}}$ has the convolution 
kernel $K$ satisfying $K \in L^1({\mathbb R}^n)$ (see \cite[Corollary 197]{book}). 
Hence the Bessel potential
$(1-\Delta)^{-\frac{s}{2}}$
is bounded on $X$ since we have a pointwise estimate 
$|(1-\Delta)^{-\frac{s}{2}}f(x)| \lesssim M f(x)$, $x \in {\mathbb R}^n$, 
for all $f \in X$; see \cite[Proposition 159]{book}. Thus, 
$W^s_X({\mathbb R}^n)$ is a Banach space.
Note that this is a natural extension of $L^{p,s}(w)$
considered in \cite{Izuki-Taiwan,INNS23}
if $X=L^p(w)$ with $1<p<\infty$ and $w \in A_p$.
By definition $W^0_X({\mathbb R}^n)=X$.
Furthermore, if $s$ is a positive integer,
by virtue of the boundedness of singular integral operators,
which is guaranteed by the boundedness of $M$ on $X$ and $X'$,
the space $W^s_X$ concides with the space of all
$f \in X$ such that any weak partial derivative up to order $s$ belong to $X$.
Something similar applies if we merely assume that
 $M_{\rm loc}$ is bounded on $X$ and on $X'$.
In this case we need to replace
$(1-\Delta)^{-\frac{s}{2}}$
with
$(1-t_0{}^2\Delta)^{-\frac{s}{2}}$
with some $0<t_0 \ll 1$.

Our definitions can be extended to domains.
Let $D$ be a bounded Lipschitz domain.
Let $k \in \mathbb{N}$, $1 < p < \infty$,
and $w \in A_p$.
Define $L^{p,k}(D,w)$ to be the set of all
$f \in L^p(w)$ for which $\partial^\alpha f$,
the weak partial derivative of order $\alpha$,
belongs to $L^p(w)$ for any multiindex
$\alpha$ with $|\alpha| \leq k$.

For a ball Banach function space $X$,
we write $X(D)$ to be the set of all
$f \in L^1_{\rm loc}(D)$ for which
$Z f$ belongs to $X$.
Let $Z$ be the zero extension operator.
The norm of $f \in X(D)$ is given by
$\|f\|_{X(D)} = \|Z f\|_X$.

We write $W^k_X(D)$ for the set of all
$f \in L^1_{\rm loc}(D)$ for which
$\partial^\alpha f$ belongs to $X(D)$
for any multiindex $\alpha$ with
$|\alpha| \leq k$.

Here is the organization of Section \ref{s3}.
Section \ref{s3.1a}
considers wavelet characterization.
Section \ref{s3.2a}
deals with vector-valued inequalities of the Hardy--Littlewood maximal operator.
We refine a recent result in \cite{ZYY24}
in
Section \ref{s3.3a}.

\subsection{Wavelet characterization}
\label{s3.1a}

Based on the fundamental wavelet theory
(see \cite{HW-book,Meyer-book} for example), 
we can
choose
compactly supported $C^K$-functions
\begin{equation}\label{eq:210511-1}
\varphi
\mbox{ and }
\psi^l \quad (l=1,2, \ldots , 2^n-1)
\end{equation}
so that the following conditions are satisfied:
\begin{enumerate}
\item[$(1)$]
For any $J\in \mathbb{Z}$, 
the system
\[
\left\{ \varphi_{J,k}, \, \psi^l_{j,k} \, : \, 
k\in \mathbb{Z}^n , \, j\ge J, \, l=1,2, \ldots , 2^n-1 \right\}
\]
is an orthonormal basis of $L^2({\mathbb R}^n)$.
Here, given a 
function $F$ defined on $\mathbb{R}^n$, we write 
\[
F_{j,k}:= 2^{\frac{j n}{2}}F(2^j \cdot -k)
\]
for $j \in {\mathbb Z}$ and $k\in {\mathbb Z}^n$.
\item[$(2)$]
Denote by ${\mathcal P}_{0}^\perp$
the set of all integrable functions having $0$ integral.
We have
\begin{equation}\label{eq:241226-1}
\psi^l \in  {\mathcal P}_{0}^\perp \quad
(l=1,2, \ldots , 2^n-1).
\end{equation}
In addition, 
they are real-valued 
and compactly supported with
\begin{equation}\label{eq:200111-1}
\mathrm{supp}(\varphi)=\mathrm{supp}(\psi^l)=[0,2N-1]^n
\end{equation}
for some $N\in \mathbb{N}$.
\end{enumerate}

We also consider
cubes 
$Q_{j,k}$ and $Q^*_{j,k}$ defined by
\[
Q_{j,k}:=\prod_{m=1}^n
\left[ 2^{-j}k_m , 2^{-j}(k_m+1) \right]
, \quad
Q^*_{j,k}:=\prod_{m=1}^n
\left[ 2^{-j}k_m , 2^{-j}(k_m+2N-1) \right] ,
\]
where $N\in \mathbb{N}$ 
satisfies (\ref{eq:200111-1}), and
\begin{equation}\label{eq:211202-206}
\chi_{j,k} := 
2^{\frac{j n}{2}}\chi_{Q_{j,k}}
, \quad
\chi^*_{j,k} := 
2^{\frac{j n}{2}}\chi_{Q^*_{j,k}}  .
\end{equation}

We note that
the 
$L^2$-inner product $\langle f,\phi \rangle$
exists 
for the complex-valued functions 
$f \in L^1_{\rm loc}({\mathbb R}^n)$ and $\phi \in C_{\rm c}$.
Then, using
the $L^2$-inner product, 
we define two square functions
$V f$,
$W_sf$ by
\begin{align*}
Vf&:= V^{\varphi}f:= \left( \sum_{k\in \mathbb{Z}^n} \left| 
\langle f,\varphi_{J,k} \rangle \chi_{J,k} \right|^2 \right)^{\frac12},\\
W_{s}f&:= W_{s}^{\psi^l}f:= \left( 
\sum_{l=1}^{2^n-1}
\sum_{j=J}^\infty
\sum_{k\in \mathbb{Z}^n} \left| 2^{js}
\langle f,\psi_{j,k}^l \rangle \chi_{j,k} \right|^2 \right)^{\frac12}.
\end{align*}
Here, 
%$s$ is a non-negative integer 
$s\ge 0$ is a non-negative real number
and $J$ is a fixed integer.
We abbreviate $Wf:=W_0f$.
%We note that the sums defininig $V f$ and $W_s f$ converge almost everywhere.

The wavelet characterization of $X$ can be obtained with ease.
\begin{theorem}\label{thm:241102-2}
Let $K>s>0$.
Suppose that we have $\varphi,\psi^l \in C^K({\mathbb R}^n)$
as in $(\ref{eq:210511-1})$.
Assume that
$(\ref{eq:241226-1})$ holds,
that 
$M_{\rm loc}$ is bounded on $X$ and on $X'$.
Then the following are equivalent for any $f \in X${\rm:}
\begin{enumerate}
\item[$(1)$]
$V f+W_s f \in X$.
\item[$(2)$]
$f \in W^s_X({\mathbb R}^n)$.
\end{enumerate}
Furthermore, in this case
$\|f\|_{W^s_X} \sim \|V f+W_s f\|_X$.
\end{theorem}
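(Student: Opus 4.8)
The plan is to deduce Theorem~\ref{thm:241102-2} from the extrapolation principle of Theorem~\ref{thm:241102-1a}, so that the entire matter reduces to a weighted $L^p$ statement that is classical. Note first that the standing hypothesis, $M$ bounded on $X$ and on $X'$, is exactly what Theorem~\ref{thm:241102-1a} requires. Fix $1<p<\infty$. I claim it suffices to establish, for every $w\in A_p$, the two-sided weighted equivalence
\begin{equation}\label{eq:plan-weighted}
\|(1-\Delta)^{\frac{s}{2}}f\|_{L^p(w)}\sim\|Vf+W_sf\|_{L^p(w)},
\end{equation}
where both implicit constants can be taken to be increasing functions of $[w]_{A_p}$ alone (and of $p,s,n$ and the fixed wavelet). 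Indeed, the two halves of (\ref{eq:plan-weighted}) say precisely that the families
\[
{\mathcal F}_1=\{(Vf+W_sf,\,(1-\Delta)^{\frac{s}{2}}f)\},\qquad
{\mathcal F}_2=\{((1-\Delta)^{\frac{s}{2}}f,\,Vf+W_sf)\}
\]
are each of the form (\ref{eq:241222-1}) for a suitable increasing $N$. Applying Theorem~\ref{thm:241102-1a} to ${\mathcal F}_1$ yields $\|Vf+W_sf\|_X\lesssim\|(1-\Delta)^{\frac{s}{2}}f\|_X=\|f\|_{W^s_X}$, while applying it to ${\mathcal F}_2$ yields the reverse bound; together these give the equivalence of $(1)$ and $(2)$ and the norm comparison $\|f\|_{W^s_X}\sim\|Vf+W_sf\|_X$.

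Before turning to (\ref{eq:plan-weighted}) I would dispose of the measurability bookkeeping. Since $X\hookrightarrow L^1_{\rm loc}({\mathbb R}^n)$ by $({\rm P}5)$ and the functions $\varphi_{J,k},\psi^l_{j,k}$ are smooth and compactly supported, all the inner products $\langle f,\varphi_{J,k}\rangle$ and $\langle f,\psi^l_{j,k}\rangle$ are well defined for $f\in X$, so that $Vf$ and $W_sf$ make sense. In the direction $(1)\Rightarrow(2)$ one must also check that $(1-\Delta)^{\frac{s}{2}}f$ is represented by a genuine function once $Vf+W_sf\in X$: this is supplied by the wavelet reconstruction formula, whose synthesis operator produces $(1-\Delta)^{\frac{s}{2}}f$ as a convergent sum of molecules, and the extrapolation conclusion $\|(1-\Delta)^{\frac{s}{2}}f\|_X<\infty$ then places it in $X$. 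Alternatively, one may first prove (\ref{eq:plan-weighted}) for $f$ in a convenient dense class and pass to the limit using the Fatou property $({\rm P}3)$.

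The heart of the argument is therefore (\ref{eq:plan-weighted}), which is the weighted wavelet characterization of the fractional Sobolev space $L^{p,s}(w)$. Set $g:=(1-\Delta)^{\frac{s}{2}}f$ and use $\langle f,\psi^l_{j,k}\rangle=\langle g,(1-\Delta)^{-\frac{s}{2}}\psi^l_{j,k}\rangle$ to absorb the smoothness weight into the test functions: with $\Psi^l_{j,k}:=2^{js}(1-\Delta)^{-\frac{s}{2}}\psi^l_{j,k}$ one has $2^{js}\langle f,\psi^l_{j,k}\rangle=\langle g,\Psi^l_{j,k}\rangle$. Because $K>s>0$, the $\psi^l$ are $C^K$, compactly supported, and of mean zero by (\ref{eq:241226-1}); hence the $\Psi^l_{j,k}$ are smooth molecules at scale $2^{-j}$ with regularity and decay governed by $K$ and $s$. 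Consequently $W_sf$ is comparable to the molecular square function of $g$, and together with the low-frequency term $Vf$ it reproduces $g$ through the discrete Calder\'on reproducing formula, the cancellation (\ref{eq:241226-1}) being what makes the high-frequency square function equivalent to $g$ rather than merely dominated by it. Both inequalities in (\ref{eq:plan-weighted}) then follow from the standard weighted Littlewood--Paley theory for $A_p$ weights: the analysis (square-function) bound and the synthesis (reconstruction) bound, each valid on $L^p(w)$.

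The main obstacle is the \emph{quantitative} form of (\ref{eq:plan-weighted}): one must verify that the almost-orthogonality and molecular constants, as well as the operator norm of the reconstruction map on $L^p(w)$, depend on $w$ only through an increasing function of $[w]_{A_p}$, since this is exactly the input needed to place ${\mathcal F}_1$ and ${\mathcal F}_2$ in the form (\ref{eq:241222-1}). The saving grace is that extrapolation demands no sharpness whatsoever: any increasing bound suffices, and such bounds are automatic from the quantitative weighted estimates for $M$ and for Calder\'on--Zygmund-type square functions, whose constants are manifestly increasing in $[w]_{A_p}$. The hypothesis $K>s$ guarantees that the molecules $\Psi^l_{j,k}$ carry enough smoothness and decay for these estimates to apply, so once the quantitative weighted equivalence (\ref{eq:plan-weighted}) is in hand, Theorem~\ref{thm:241102-1a} delivers the conclusion directly.
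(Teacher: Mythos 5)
Your proposal is correct and follows essentially the same route as the paper: both reduce the theorem to the membership of the pairs $((1-\Delta)^{\frac{s}{2}}f,\,Vf+W_sf)$ and $(Vf+W_sf,\,(1-\Delta)^{\frac{s}{2}}f)$ in ${\mathcal F}$, i.e.\ to the weighted $L^p(w)$ wavelet characterization with constants increasing in $[w]_{A_p}$, and then invoke Theorem \ref{thm:241102-1a}. The only difference is that the paper obtains this weighted ingredient by citing \cite[Theorem 4.6]{INNS23} (with a remark replacing $(1-t_0{}^2\Delta)^{-\frac{s}{2}}$ by $(1-\Delta)^{-\frac{s}{2}}$), whereas you sketch its proof via molecular estimates and weighted Littlewood--Paley theory.
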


\begin{proof}
Let $0<t_0\ll1$.
Then we have
\begin{equation}\label{eq:250102-1}
((1-t_0{}^2\Delta)^{\frac{s}{2}}f,V f+W_s f) \in {\mathcal F}_{\rm loc}
\end{equation}
and
\begin{equation}\label{eq:250102-2}
(V f+W_s f,(1-t_0{}^2\Delta)^{\frac{s}{2}}f) \in {\mathcal F}_{\rm loc}
\end{equation}
from \cite[Theorem 4.6]{INNS23}.
\end{proof}

Despite the above observations, 
it is helpful to offer some words
(\ref{eq:250102-1})
and
(\ref{eq:250102-2})
when $s$ is an integer.
\begin{remark}
Let $X = L^p(\mathbb{R}^n)$. Relations
\eqref{eq:250102-1} and \eqref{eq:250102-2} are essentially proved in the books \cite{HW-book, Meyer-book}.

Now let $X = L^p(w)$ with $w \in A_p$. In this weighted setting, Lemari\'e-Rieusset \cite{Le} has established the result for the case $s = 0$.

Further, consider $X = W^{s,p}(w)$ with $w \in A_p$ and $s \in \{0,1,2,\ldots\}$. The first author \cite{Izuki-Taiwan} has extended the result to this broader context.

Therefore, we are in a position to apply Theorems \ref{thm:241102-1a} and \ref{thm:241102-1ab} once again.
\end{remark}

Letting \( s = 0 \), we derive the following corollary. 
It suffices to assume \emph{a priori} that \( f \) is locally integrable.

\begin{corollary}\label{thm:241102-1}
Assume that
$M_{\rm loc}$ is bounded on $X$ and on $X'$,
or equivalently,
each ${\mathcal R}_j$ is bounded on $X$.
Then the following are equivalent for any $f \in L^1_{\rm loc}({\mathbb R}^n)${\rm:}
\begin{enumerate}
\item[$(1)$]
$V f+W f \in X$.
\item[$(2)$]
$f \in X$.
\end{enumerate}
Furthermore, in this case
$\|f\|_X \sim \|V f+W f\|_X$.
\end{corollary}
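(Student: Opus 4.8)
The plan is to obtain Corollary~\ref{thm:241102-1} by mimicking the proof of Theorem~\ref{thm:241102-2} at the endpoint $s=0$, where that theorem itself does not apply (it requires $K>s>0$), and then invoking the extrapolation Theorem~\ref{thm:241102-1a}. First I would substitute $s=0$ into the three ingredients that appear in the proof of Theorem~\ref{thm:241102-2}: the Bessel potential $(1-\Delta)^{\frac{s}{2}}$ becomes the identity, the weighted square function $W_s f$ collapses to $Wf=W_0f$, and the target space $W^s_X(\mathbb{R}^n)$ becomes $W^0_X(\mathbb{R}^n)=X$ by definition. With these substitutions the relations \eqref{eq:250102-1} and \eqref{eq:250102-2} read
\[
(f,\ Vf+Wf)\in\mathcal{F}
\quad\text{and}\quad
(Vf+Wf,\ f)\in\mathcal{F}.
\]
Thus the corollary reduces to establishing these two memberships and applying the extrapolation theorem to each.

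The second step is to justify the two memberships at $s=0$. Since the family $\mathcal{F}$ in \eqref{eq:241222-1} is built from weighted estimates of the form $\|u\|_{L^p(w)}\le N([w]_{A_p})\|v\|_{L^p(w)}$, what I need is the weighted wavelet characterization of $L^p(w)$ at smoothness zero, namely
\[
\|f\|_{L^p(w)}\sim\|Vf+Wf\|_{L^p(w)}
\]
for every $w\in A_p$, with both implicit constants bounded by increasing functions of $[w]_{A_p}$. This is exactly the weighted $s=0$ case treated by Lemari\'{e}-Rieusset \cite{Le} and recorded in \cite[Theorem 4.6]{INNS23}; the monotone dependence on $[w]_{A_p}$ is what allows me to choose increasing functions $N$ and conclude both $(f,Vf+Wf)\in\mathcal{F}$ and $(Vf+Wf,f)\in\mathcal{F}$. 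Applying Theorem~\ref{thm:241102-1a} to the first pair gives $\|f\|_X\lesssim\|Vf+Wf\|_X$, and to the second pair gives $\|Vf+Wf\|_X\lesssim\|f\|_X$, which together yield the asserted equivalence $\|f\|_X\sim\|Vf+Wf\|_X$.

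Finally I would address the \emph{a priori} hypothesis that $f$ is only locally integrable rather than already a member of $X$. Local integrability suffices for the coefficients $\langle f,\varphi_{J,k}\rangle$ and $\langle f,\psi^l_{j,k}\rangle$ to be well defined, since the wavelets are compactly supported $C^K$ functions, so $Vf$ and $Wf$ make sense as measurable functions. The equivalence of $(1)$ and $(2)$ is then immediate from the two one-sided estimates: if $f\in X$ then $\|Vf+Wf\|_X\lesssim\|f\|_X<\infty$, whence $Vf+Wf\in X$; conversely if $Vf+Wf\in X$ then $\|f\|_X\lesssim\|Vf+Wf\|_X<\infty$, whence $f\in X$. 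I expect the only genuinely delicate point to be verifying that the weighted characterization at $s=0$ holds with constants that are \emph{increasing} in $[w]_{A_p}$, since this quantitative form---rather than mere boundedness for each fixed weight---is precisely what feeds the definition of $\mathcal{F}$ and hence licenses the application of Theorem~\ref{thm:241102-1a}.
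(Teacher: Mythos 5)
Your proposal is correct and follows essentially the same route as the paper: the paper likewise reduces the corollary to the $s=0$ case of the two memberships $(f,Vf+Wf)\in\mathcal{F}$ and $(Vf+Wf,f)\in\mathcal{F}$, justified by the weighted characterization of Lemari\'{e}-Rieusset \cite{Le} (recorded in \cite[Theorem 4.6]{INNS23}), and then applies Theorem \ref{thm:241102-1a} in both directions, noting only that local integrability of $f$ suffices for the wavelet coefficients to make sense.
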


The proof is the same as Theorem \ref{thm:241102-2},
although we assume that $f$ is merely locally integrable.

It is interesting to compare Corollary \ref{thm:241102-1}
with the following result:
\begin{theorem}
\label{thm:241226-1}
Assume that
$M_{\rm loc}$ is bounded on $X$ and on $X'$
and that
$X$ is separable.
Let $f \in X$.
Then
\begin{equation}\label{eq:250114-3}
f=\sum_{k \in {\mathbb Z}^n}
\langle f,\varphi_{J,k} \rangle \varphi_{J,k}
+
\sum_{l=1}^{2^n-1}
\sum_{j=J}^\infty
\sum_{k \in {\mathbb Z}^n}
\langle f,\psi^l_{j,k} \rangle \psi^l_{j,k}
\end{equation}
holds unconditionally in the topology of $X$.
\end{theorem}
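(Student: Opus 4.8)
The plan is to reduce the unconditional convergence of $(\ref{eq:250114-3})$ to a single vanishing-tail estimate, using the square-function equivalence $\|f\|_X\sim\|Vf+Wf\|_X$ of Corollary~\ref{thm:241102-1} as the only analytic input and exploiting that a \emph{separable} ball Banach function space has absolutely continuous norm. Index the combined wavelet family $\{\varphi_{J,k}\}_{k\in{\mathbb Z}^n}\cup\{\psi^l_{j,k}\}_{l,j,k}$ by a countable set $\Lambda$, writing it as $\{\eta_\mu\}_{\mu\in\Lambda}$ and letting $\chi_\mu$ be the accompanying $\chi_{J,k}$ or $\chi_{j,k}$, so that $(\ref{eq:250114-3})$ becomes $f=\sum_{\mu\in\Lambda}\langle f,\eta_\mu\rangle\eta_\mu$. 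Since a series in a Banach space converges unconditionally exactly when it is Cauchy over finite index sets, it suffices to produce, for each $\delta>0$, a finite $F\subset\Lambda$ with $\|\sum_{\mu\in G}\langle f,\eta_\mu\rangle\eta_\mu\|_X<\delta$ for every finite $G\subset\Lambda\setminus F$.

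First I would record the core pointwise domination. For finite $G\subset\Lambda$ set $f_G:=\sum_{\mu\in G}\langle f,\eta_\mu\rangle\eta_\mu$. Because $\{\eta_\mu\}$ is orthonormal in $L^2({\mathbb R}^n)$, the wavelet coefficients of $f_G$ agree with those of $f$ on $G$ and vanish off $G$; hence, directly from the definitions of $V$ and $W$ and the elementary two-term equivalence $a+b\sim(a^2+b^2)^{1/2}$,
\[
Vf_G+Wf_G\;\sim\;\Bigl(\sum_{\mu\in G}\bigl|\langle f,\eta_\mu\rangle\,\chi_\mu\bigr|^2\Bigr)^{1/2}
\;\le\;\Bigl(\sum_{\mu\in\Lambda\setminus F}\bigl|\langle f,\eta_\mu\rangle\,\chi_\mu\bigr|^2\Bigr)^{1/2}=:S_F
\]
pointwise whenever $G\subset\Lambda\setminus F$. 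Applying Corollary~\ref{thm:241102-1} to $f_G$ and then $(\mathrm{P}2)$ yields $\|\sum_{\mu\in G}\langle f,\eta_\mu\rangle\eta_\mu\|_X\sim\|Vf_G+Wf_G\|_X\lesssim\|S_F\|_X$, with constants independent of $G$ and $F$. Thus the Cauchy criterion reduces to showing $\|S_F\|_X\to0$ as $F\uparrow\Lambda$ along an increasing sequence.

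Next I would make the tail vanish. By Corollary~\ref{thm:241102-1} applied to $f$ itself, $Vf+Wf\in X$, so the full square function $S_\emptyset\sim Vf+Wf$ is finite almost everywhere; consequently $S_F$ decreases to $0$ pointwise almost everywhere as $F$ exhausts $\Lambda$, while $0\le S_F\le S_\emptyset\in X$. Since separability of $X$ forces the norm to be absolutely continuous, the dominated-convergence property of such norms gives $\|S_F\|_X\to0$, completing the Cauchy estimate. Hence $(\ref{eq:250114-3})$ converges unconditionally in $X$ to some $g\in X$. To identify $g=f$, note that convergence in $X$ implies convergence in $L^1_{\rm loc}$ by $(\mathrm{P}5)$, hence in the sense of distributions; testing against $\phi\in C^\infty_{\rm c}({\mathbb R}^n)$ and invoking the $L^2$-reproduction $\phi=\sum_\mu\langle\phi,\eta_\mu\rangle\eta_\mu$ (valid since $\{\eta_\mu\}$ is an orthonormal basis of $L^2$ and $\phi$ is smooth with compact support, so its wavelet partial sums converge in a topology that pairs with $f\in L^1_{\rm loc}$) shows $\langle g,\phi\rangle=\langle f,\phi\rangle$, whence $g=f$.

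The main obstacle is the passage from pointwise to norm vanishing of the tails $S_F$, which is exactly where separability is indispensable: absolute continuity of the norm is what upgrades $S_F\downarrow0$ almost everywhere to $\|S_F\|_X\to0$, and it also underlies the identification of the limit. For spaces retaining an $L^\infty$-type component the norm is not absolutely continuous and norm convergence of $(\ref{eq:250114-3})$ genuinely fails, so no argument can dispense with this hypothesis. Everything else is a mechanical consequence of the orthonormality of $\{\eta_\mu\}$ and the square-function characterization already established in Corollary~\ref{thm:241102-1}.
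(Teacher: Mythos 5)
Your proposal is correct and follows essentially the same route as the paper, whose own proof is just a two-line sketch: separability of $X$ gives the absolutely continuous norm, and Corollary \ref{thm:241102-1} (the square-function equivalence $\|f\|_X \sim \|Vf+Wf\|_X$) then yields convergence of (\ref{eq:250114-3}) in $X$. Your write-up fills in exactly the details the paper leaves implicit — the Cauchy criterion over finite index sets, the pointwise domination of partial sums by the tail square function $S_F$, the dominated-convergence property of absolutely continuous norms, and the identification of the limit by testing against $C^\infty_{\rm c}$ functions — all of which are sound.
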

See \cite[Theorem 8]{Karlovich21} for the case of $n=1$.
Once we use Corollary \ref{thm:241102-1}, Theorem \ref{thm:241226-1} is easy to prove. In fact, since \( X \) is separable, it is straightforward to show that \( X \) has the absolutely continuous norm. Corollary \ref{thm:241102-1} ensures that equation (\ref{eq:250114-3}) converges in \( X \).

Let ${\mathcal B}$ denote the set of all Borel measurable sets. We define the set
\[
S_0 = \left\{ \sum_{j=1}^N a_j \chi_{E_j} : \{a_1, a_2, \ldots, a_N\} \subset {\mathbb C}, \{E_1, E_2, \ldots, E_N\} \subset {\mathcal B} \right\}.
\]
Assume that \( M \) is bounded in both \( X \) and \( X' \).

It is important to mention the approach taken by Karlovich, who refined the results in \cite[Theorem 4.1]{INS-2015}, assuming that \( X \) is separable as we have mentioned. If either $X$ is separable or if \( L^2({\mathbb R}^n) \cap X \) is dense in \( X \), then Karlovich \cite{Karlovich21} and the authors in \cite{INS-2015} proved unconditional convergence in (\ref{eq:250114-3}), respectively.
Among others, Karlovich used the fact that $S_0 \cap X$ is dense in $X$
assuming that $X$ is separable.
It is noteworthy that $X$ has absolutely continuous norm if $X$ is separable.

Furthermore, in any of these cases the set
\[
{\mathcal X} = \left\{ \psi^l_{j,k} : l = 1, 2, \ldots, 2^n - 1, j \ge J, k \in {\mathbb Z}^n \right\} \cup \left\{ \varphi_{J,k} : k \in {\mathbb Z}^n \right\}
\]
is dense in \( X \). In particular, this implies that \( {\mathcal X} \) is dense in \( X \).
It is clear that $X$ is separable
and that $L^2({\mathbb R}^n) \cap X$ are contained
in ${\mathcal X}$.
This means that the assumptions
in \cite[Theorem 8]{Karlovich21} and \cite[Theorem 4.1]{INS-2015}
are equivalent.

\subsection{Vector-valued maximal inequality}
\label{s3.2a}

A similar technique can be used to obtain the vector-valued inequality.
\begin{example}
Let $r\in(1,\infty]$.
Assume that
$M_{\rm loc}$ is bounded on $X$ and on $X'$.
Then there exists a constant $C>0$ such that
\begin{equation}\label{eq:250418-10}
\left\|\left(\sum_{j=1}^\infty M_{\rm loc}f_j{}^r\right)^{\frac1r}\right\|_X
\le C
\left\|\left(\sum_{j=1}^\infty|f_j|^r\right)^{\frac1r}\right\|_X
\end{equation}
for all $\{f_j\}_{j=1}^\infty \subset X$
since
\[
\left(\left(\sum_{j=1}^\infty M_{\rm loc}f_j{}^r\right)^{\frac1r},\left(\sum_{j=1}^\infty|f_j|^r\right)^{\frac1r}\right)
\in {\mathcal F}_{\rm loc}
\]
for $N(t)=\alpha t^\beta$ where $t \gg 1$ and $\alpha$, $\beta$ are constants.
This result recaptures \cite[Theorem 3.1]{CMM22}. See
\cite[Theorem 303]{book}
for the case of $M$ to find
\[
\left\|\left(\sum_{j=1}^\infty M f_j{}^r\right)^{\frac1r}\right\|_{L^p(w)}
\le \alpha([w]_{A_p})^\beta
\left\|\left(\sum_{j=1}^\infty|f_j|^r\right)^{\frac1r}\right\|_{L^p(w)}
\]
for all $\{f_j\}_{j=1}^\infty \subset X$.
Estimate (\ref{eq:250418-10}) corresponding to the vector-valued maximal inequality
for $M$ obtained in \cite[Theorem 2.1]{MaSa25}.
\end{example}

\subsection{Extension operator}
\label{s3.3a}

Let $D$ be a bounded Lipshitz domain.
Chua established that there exists a bounded linear operator
$\Lambda:L^{p,k}(D,w) \to L^{p,k}(w)$
independent of $w$ such that
$\Lambda f|D=f$ for all $f \in L^{p,k}(D,w)$.
More precisely, letting $Z$ the zero extension operator,
Chua essentially proved that
\[
\left(\Lambda f,\sum_{|\alpha| \le k}|Z \partial^\alpha f|\right) \in {\mathcal F}
\]
with some suitable choice of $N(\cdot)$.
See \cite[Theorem 1.1]{Chua}.
Based on this fact, we investigate the action of the extension operator $\Lambda$
on $X$-based Sobolev spaces over $D$.
\begin{example}\label{example:250110-1}
Assume that
$M$ is bounded on $X$ and on $X'$,
or equivalently,
each ${\mathcal R}_j$ is bounded on $X$.
Let
$f \in W^k_X(D)$.
Then
$f \in L^{p,k}(w,D)$
for some $w \in A_1$.
Thus,
$\Lambda f$ makes sense
and
$\displaystyle \|\Lambda f\|_X \sim \sum_{|\alpha| \le k}\|Z\partial^\alpha f\|_X$.
\end{example}

We compare Example \ref{example:250110-1} with
an existing result \cite[Theorem 5.4]{ZYY24}.
\begin{remark}
\
\begin{enumerate}
\item
A recent work \cite[Theorem 5.4]{ZYY24} requires that 
$X^{\frac{1}{p}}$ is a ball Banach space for some $p > 1$
and that $M$ is bounded 
on $(X^{\frac{1}{p}})'$. This assumption implies that 
$M$ is bounded on both $X$ and $X'$, since it implies 
that ${\mathcal{R}}_j$ is bounded on $X$.
\item
A recent work \cite[Theorem 5.4]{ZYY24} requires that 
$X$ has the absolutely continuous norm. 
According to 
Theorems \ref{thm:241102-1a}
and \ref{thm:241102-1ab}, 
$X$ 
need not have the absolutely continuous norm.
\end{enumerate}
\end{remark}

\section{Examples of Banach lattices 
to which 
Theorems \ref{thm:241102-1a}
and \ref{thm:241102-1ab} are applicable}
\label{s5}

We will exhibit examples
 of Banach lattices 
to which 
Theorems \ref{thm:241102-1a}
and \ref{thm:241102-1ab}
are
applicable.
We concentrate on wavelet characterization.
However, it is worth mentioning that we have a counterpart
to vector-valued inequalities.

\subsection{Weighted Lebesgue spaces}

%Let $1<q<\infty$.
%Recall that
%a weight $w$ is an $A_q$-weight
%if
%$\displaystyle
%[w]_{A_q}
%:=
%\sup\limits_{Q\in{\mathcal Q}}
%m_{Q}(w)m^{(\frac{1}{q-1})}_{Q}(w^{-1})<\infty.
%$

Let $1<q<\infty$ and
$w \in A_q$.
It is well known that $M$ and ${\mathcal R}_j$,
$j=1,2,\ldots,n$, are bounded on $L^q(w)$
\cite[Theorems 290 and 306]{book}.
Thus,
Theorems \ref{thm:241102-1a}
and \ref{thm:241102-1ab}
are applicable to $L^q(w)$.
If we merely assume that $w \in A_{q,\rm loc}$,
then only
Theorem \ref{thm:241102-1ab}
is applicable to $L^q(w)$.

Consider a more concrete case. Let 
$1<q<\infty$ and
$v(x)=|x|^\alpha$ be a  
power weight of order $\alpha \in {\mathbb R}$. According  
to \cite[Theorem 283]{book}, $v \in A_q$ if and only if  
$-n < \alpha < n(q-1)$. This condition also applies to the  
weight $W$, defined by $W(x)=\max(1,|x|)^\alpha$ for  
$x \in {\mathbb R}^n$. Therefore, if $-n < \alpha < n(q-1)$,  
Theorems \ref{thm:241102-1a}
and \ref{thm:241102-1ab} applicable to both $L^q(v)$  
and $L^q(W)$.

\subsection{Lorentz spaces}

Let 
$1 \le p<\infty$ and
$1 \le q \le \infty$.
Then the Lorentz space $L^{p,q}({\mathbb R}^n)$ is 
the set of all
$f \in L^0({\mathbb R}^n)$ for which the quasi-norm
\[
\|f\|_{L^{p,q}}
:=
\left\{
\int_0^{\infty} (t^{\frac{1}{p}}f^*(t))^{q}\frac{{\rm d}t}{t}
\right\}^{\frac{1}q}
\]
is finite.
Here $f^*$ stands for the decreasing rearrangement of $f$.
Remark that
if $1<p<\infty$,
then there exists a norm
$\|\cdot\|$ on $L^{p,q}({\mathbb R}^n)$
such that
\[
\|f\|_{L^{p,q}}
\sim
\|f\|
\]
for all $f \in L^{p,q}({\mathbb R}^n)$
\cite[Corollary 39]{book}.

Furthermore, since the space $L^{p,q}({\mathbb R}^n)$ can be
obtained through the real interpolation of the spaces
$L^{p_0}({\mathbb R}^n)$ and $L^{p_1}({\mathbb R}^n)$ as
shown in \cite{BeLo76}, the operators $M$ and
${\mathcal R}_j$ (for $j = 1, 2, \ldots, n$) are bounded on
$L^{p,q}({\mathbb R}^n)$ for $1 < p < \infty$ and $1 \le q
\le \infty$. Consequently, 
Theorems \ref{thm:241102-1a}
and \ref{thm:241102-1ab} can
be applied to $L^{p,q}({\mathbb R}^n)$.

To our knowledge, the wavelet characterization of the Lorentz
spaces $L^{p,q}({\mathbb R}^n)$ for $1 < p < \infty$ and $1
\le q \le \infty$ has been established by Soardi
\cite{Soardi}.

It is important to note that Lorentz spaces $L^{p,\infty}({
\mathbb R}^n)$, also known as the weak Lebesgue spaces
${\rm W}L^p({\mathbb R}^n)$, are not separable. Therefore,
in this case, we cannot apply Theorem \ref{thm:241226-1} to
obtain the unconditional convergence. However, according to Remark
\ref{rem:241226-1}, for any $\eta \in (1,p)$ and any $f \in
{\rm W}L^p({\mathbb R}^n)$, there exists an $A_1$-weight $w$
(which is independent of $f$) such that the convergence of
the expansion in Theorem \ref{thm:241226-1} occurs in
$L^{\eta}(w)$.

\subsection{Herz spaces}

For each
$k \in {\mathbb Z}$,
we set
${C}_k:=B(2^k) \setminus B(2^{k-1})$.
%and denote by $\chi_k$ its indicator function. 
Let
$p,q \in [1,\infty]$ and $\alpha\in \mathbb{R}$. 
We write
$\chi_k=\chi_{C_k}$
for each $k \in {\mathbb Z}$.
The non-homogeneous Herz space $K_{p}^{\alpha,q}({\mathbb R}^n)$ consists of all 
$f \in L^0({\mathbb R}^n)$ for which
\[
\| f\|_{K_{p}^{\alpha,q}} := 
\| f\|_{L^{p}(B(1)) } 
+ 
\left(\sum_{k=1}^\infty(2^{k\alpha} \|f\chi_k\|_{L^p})^q\right)^{\frac1q}
<\infty
\]
and
the homogeneous Herz space $\dot{K}_{p}^{\alpha,q}({\mathbb R}^n)$ consists of all 
$f \in L^0({\mathbb R}^n)$ for which
\[
\| f\|_{\dot{K}_{p}^{\alpha,q}} := 
\left(\sum_{k=-\infty}^\infty(2^{k\alpha} \|f\chi_k\|_{L^p})^q\right)^{\frac1q}
<\infty. 
\]

Let 
$\alpha \in {\mathbb R}$,
$1
<p<\infty$
and
$1 \le q \le \infty$
satisfy
\begin{equation}\label{eq:230830-11a}
-\frac{n}{p}<\alpha<n-\frac{n}{p}.
\end{equation}
Then $M$ and ${\mathcal R}_j$ are bounded on $\dot{K}^{\alpha,q}_{p}({\mathbb R}^n)$ 
and $K^{\alpha ,q}_{p}({\mathbb R}^n)$
for all $j=1,2,\ldots,n$. See \cite{LY95,LY97,LY98}. Thus, 
Theorems \ref{thm:241102-1a}
and \ref{thm:241102-1ab}
are applicable to $K^{\alpha ,q}_{p}({\mathbb R}^n)$ 
and $\dot{K}^{\alpha ,q}_{p}({\mathbb R}^n)$. As far as we know, there are 
some results \cite{HWY,Tachi}
on wavelet characterization of Herz spaces. 
Since $M$ is bounded 
on $\dot{K}^{\alpha,q}_{p}({\mathbb R}^n)$, $K^{\alpha ,q}_{p}({\mathbb R}^n)$ 
and their preduals, it follows that many of these results fall within the scope 
of Corollary \ref{thm:241102-1}. 
Remark that $\dot{K}^{\alpha,\infty}_{p}({\mathbb R}^n)$ 
and $K^{\alpha,\infty}_{p}({\mathbb R}^n)$ are not separable. However, in view 
of Remark \ref{rem:241226-1}, we see that for any $\eta\in(1,p)$ and $f \in 
\dot{K}^{\alpha,\infty}_{p}({\mathbb R}^n) \cup K^{\alpha,\infty}_{p}({\mathbb R}^n)$ 
there exists an $A_1$-weight $w$ independent of $f$ such that the convergence 
of the expansion in Theorem \ref{thm:241226-1} takes place in $L^{\eta}(w)$.

\subsection{Weighted Lebesgue spaces with variable exponents}

Let $w$ be a weight.
The Lebesgue space $L^{p(\cdot)}(w)$ with variable exponent
consists of all
$f \in L^0({\mathbb R}^n)$ 
for which
\[
\| f\|_{L^{p(\cdot)}(w)} 
:= \inf
\left\{\lambda>0\,:\, \int_{\mathbb{R}^n} 
\left( \frac{|f(x)|}{\lambda} \right)^{p(x)}w(x){\rm d}x\le 1\right\}<\infty.
\]
We postulate some conditions
on variable exponents.
We use the following terminology:
we use the following standard notation
on exponents:
\begin{definition}\
\begin{enumerate}
\item
Let
$r(\cdot)$ be a variable exponent.
We write
\[
r_-:={\rm essinf}_{x \in {\mathbb R}^n}r(x), \quad
r_+:={\rm esssup}_{x \in {\mathbb R}^n}r(x).
\]
\item
The class
${\mathcal P}_0={\mathcal P}_0({\mathbb R}^n)$ collects all the variable exponents
$r(\cdot) \, : \, \mathbb{R}^n \to (0,\infty)$ that satisfy $0<r_-\le r_+<\infty$,
while
${\mathcal P}={\mathcal P}({\mathbb R}^n)$ collects all the variable exponents
$r(\cdot)$
in ${\mathcal P}_0$
that satisfy $r_->1$.
\end{enumerate}
\end{definition}
\begin{definition}
Let $r(\cdot) \in L^0({\mathbb R}^n)$.
\begin{enumerate}
\item
The variable exponent $r(\cdot)$ satisfies 
the local $\log$-H\"{o}lder continuity condition
if
\begin{equation}\label{logHolder}
|r(x)-r(y)|\le\frac{c_*}{\log(|x-y|^{-1})}
 \quad\text{for}\quad
x,y\in{\mathbb R}^n
\quad\mbox{with}\quad|x-y| \le \frac12.
\end{equation}
\item
If (\ref{logHolder}) is satisfied only for $y=0$,
then $r(\cdot)$ is $\log$-H\"{o}lder continuous at the origin.
The class
${\rm LH}_0={\rm LH}_0({\mathbb R}^n)$ stands for the class of exponents
which are $\log$-H\"{o}lder continuous at the origin.
\item
The exponent $r(\cdot)$ satisfies 
the $\log$-H\"{o}lder-type decay condition at infinity
if
\begin{equation}\label{decay}
 |r(x)-r_\infty|\le\frac{c^*}{\log(e+|x|)}
 \quad\text{for}\quad
 x\in{\mathbb R}^n.
\end{equation}
The class
${\rm LH}_\infty={\rm LH}_\infty({\mathbb R}^n)$ 
represents the class of exponents
that satisfy
$(\ref{decay})$.
\item
The class
${\rm LH}={\rm LH}({\mathbb R}^n)$ contains all exponents that satisfy $(\ref{logHolder})$
and $(\ref{decay})$.
\end{enumerate}
Here $c_*$, $c^*$ and $r_\infty$ are constants independent
of $x$ and $y$.
\end{definition}
For a variable exponent $r(\cdot)$ satisfying $1<r_- \le r_+<\infty$,
write
$r'(\cdot)=\frac{r(\cdot)}{r(\cdot)-1}$.
%Here and below, the open ball centered at $x$ and of radius $r$ is denoted by $B(x,r)$.
%We abbreviate open balls to balls.
We employ the definition in \cite{CFN12}.
\begin{definition}
\label{definition-muckenhoupt-variable}
Suppose $p(\cdot) \in \mathcal{P}(\mathbb{R}^n)$. 
A weight $w$ is said to be an $A_{p(\cdot)}$-weight if $w$ satisfies
\[
\sup_{Q\in {\mathcal Q}} 
\frac{1}{|Q|} 
\left\| w^{\frac1{p(\cdot)}} \chi_Q \right\|_{L^{p(\cdot)}}
\left\| w^{-\frac1{p(\cdot)}} \chi_Q \right\|_{L^{p'(\cdot)}} 
< \infty.
\]
The Muckenhoupt class
$A_{p(\cdot)}$
adapted to $p(\cdot)$ consists of all $A_{p(\cdot)}$ weights. 
\end{definition}
Note that this definition extends the classical class
$A_p$. In fact, when $p(\cdot)$ is a constant function $p$,
the two definitions coincide.
It is known that
$M$ is bounded on $L^{p(\cdot)}(w)$
if
$p(\cdot) \in {\rm LH}$ satisfies
$1<p_- \le p_+<\infty$
and
$w \in A_{p(\cdot)}$.
Since this condition is symmetric;
in this case
$p'(\cdot) \in {\rm LH}$,
$1<p'_- \le p'_+<\infty$
and
$w^{-\frac{1}{p(\cdot)-1}} \in A_{p'(\cdot)}$,
Theorems \ref{thm:241102-1a}
and \ref{thm:241102-1ab}
are applicable to 
$L^{p(\cdot)}(w)$.
This recaptures a result by 
\cite{CFN12,CW,INS-2015}.

\subsection{Herz spaces with variable exponents}

Modelled on the previous section, 
it seems useful to study Herz spaces with variable exponents. 
The first author
\cite{IzukiAnal2010}
has established 
a method of wavelet characterization
applying
the boundeness of sublinear operators
and norm estimates for characteristic functions 
with variable exponent.
Recently more and more papers have appeared in this direction.
In this subsection
we establish a new method of wavelet characterization
as an application of the extrapolation theorem.

We now set up some notation to recall the definition 
of Herz spaces with three variable exponents
established by
the authors' previous work in
\cite{INS24}. 
Let
$p(\cdot)$,
$q(\cdot)$
be variable exponents.
The space
$\ell^{q(\cdot)}(L^{p(\cdot)}(w))$
is the set of all
sequences
$\{f_j\}_{j \in {\mathbb Z}}$
in $L^0({\mathbb R}^n)$
such that
\[
\|\{f_j\}_{j \in {\mathbb Z}}\|_{\ell^{q(\cdot)}(L^{p(\cdot)}(w))}
:=
\inf\left\{
\mu>0\,:\,
\rho_{\ell^{q(\cdot)}(L^{p(\cdot)}(w))}
(\{\mu^{-1}f_j\}_{j \in {\mathbb Z}})
\le 1
\right\}<\infty,
\]
where
\[
\rho_{\ell^{q(\cdot)}(L^{p(\cdot)}(w))}
(\{\mu^{-1}f_j\}_{j \in {\mathbb Z}})
:=
\sum_{j=-\infty}^\infty
\|\,|\mu^{-1}f_j|^{q(\cdot)}\,\|_{L^{p(\cdot)/q(\cdot)}(w)}.
\]
We sometimes consider the case where
${\mathbb Z}$ is replaced by ${\mathbb N}$.

Now, we recall the definition of two-weighted Herz spaces with three variable exponents. 
\begin{definition} 
Let $v$, 
$w$ be weights on $\mathbb{R}^n$, 
$p(\cdot),q(\cdot)\in{\mathcal P}_0(\mathbb{R}^n)$ and $\alpha(\cdot)\in L^{\infty}(\mathbb{R}^n)$. 
\begin{enumerate}
\item The non-homogeneous two-weighted Herz space $K_{p(\cdot)}^{\alpha(\cdot),q(\cdot)}(v,w)$ consists of all 
$f\in L^0(\mathbb{R}^n)$ for which
\[
\| f\|_{K_{p(\cdot)}^{\alpha(\cdot),q(\cdot)}(v,w)} := 
\| f\chi_{B_0}\|_{L^{p(\cdot)}(w) } 
+ \left\| \left\{ [v(B_k)]^{\alpha(\cdot)/n} f\chi_{C_k} \right\}_{k=1}^{\infty} \right\|_{\ell^{q(\cdot)}(L^{p(\cdot)}(w))}
\]
is finite.
\item 
The homogeneous two-weighted Herz space $\dot{K}_{p(\cdot)}^{\alpha(\cdot),q(\cdot)}(v,w)$ consists of all
$f\in L^0(\mathbb{R}^n)$ for which
\[
\| f\|_{\dot{K}_{p(\cdot)}^{\alpha(\cdot),q(\cdot)}(v,w)} := 
\left\| \left\{ [v(B_k)]^{\alpha(\cdot)/n} f\chi_k \right\}_{k=-\infty}^{\infty} \right\|_{\ell^{q(\cdot)}(L^{p(\cdot)}(w))}
<\infty. 
\]
\item
In the above, if $v=w=1$,
then we write
$\dot{K}_{p(\cdot)}^{\alpha(\cdot),q(\cdot)}({\mathbb R}^n)$
instead of
$\dot{K}_{p(\cdot)}^{\alpha(\cdot),q(\cdot)}(v,w)$
and
omit
$(v,w)$ in the norm
$\|\cdot\|_{\dot{K}_{p(\cdot)}^{\alpha(\cdot),q(\cdot)}(v,w)}$.
Analogously,
we define
$K_{p(\cdot)}^{\alpha(\cdot),q(\cdot)}({\mathbb R}^n)$.
The space
$\dot{K}_{p(\cdot)}^{\alpha(\cdot),q(\cdot)}({\mathbb R}^n)$
is called
homogeneous (non-weighted) Herz spaces with three variable exponents,
while
the space
$K_{p(\cdot)}^{\alpha(\cdot),q(\cdot)}({\mathbb R}^n)$
is called 
non-homogeneous (non-weighted) Herz spaces with three variable exponents.
\end{enumerate}
\end{definition}

Let $p(\cdot)\in{\mathcal P}(\mathbb{R}^n)\cap {\rm LH}(\mathbb{R}^n)$, 
$q(\cdot)\in{\mathcal P}_0(\mathbb{R}^n) \cap {\rm LH}_0({\mathbb R}^n) \cap {\rm LH}_\infty({\mathbb R}^n)$, 
$\alpha(\cdot)\in L^{\infty}(\mathbb{R}^n) \cap {\rm LH}_0({\mathbb R}^n) \cap {\rm LH}_\infty({\mathbb R}^n)$, 
$v\in A_{p_{v}}$ for some $p_{v}\in [1,\infty)$, $w\in A_{p_{(\cdot)}}$.
We know that there exist
constants $C>0$ and  $\delta \in (0,1)$
such that
\begin{align}
\frac{v(B)}{v(E)} & \le
C \left( \frac{|B|}{|E|} \right)^{p_v}, 
\label{lemma 5-21-1-i} \\
\frac{v(E)}{v(B)} & \le
C \left( \frac{|E|}{|B|} \right)^{\delta}
\label{lemma 5-21-1-ii} 
\end{align}
for all open balls $B$ and all measurable sets $E\subset B$.
See \cite{Duoa} for fundamental properties of the Muckenhoupt weights.
Among others, we refer to \cite[(7.3)]{Duoa} and \cite[Corollary 7.6]{Duoa}
for (\ref{lemma 5-21-1-i}) and (\ref{lemma 5-21-1-ii}),
respecitively.
Define
\begin{equation}\label{eq;230821-1}
v^-:=\begin{cases} \delta \ \ &\mbox{ if } \ \ \alpha_-\ge 0, \\ 
              p_{v}  \ \ &\mbox{ if } \ \ \alpha_-<0, 
\end{cases}\quad
v^+:=\begin{cases} p_{v} \ \ &\mbox{ if } \ \ \alpha_+\ge 0, \\
              \delta  \ \ &\mbox{ if } \ \ \alpha_+<0.
\end{cases}
\end{equation}
In our earlier paper
\cite{IzNo20},
the first and second authors established
that
there exist positive constants
$\delta_1=\delta_1(w,p(\cdot))$,
$\delta_2=\delta_2(w,p(\cdot))$,
$\delta_3=\delta_3(w,p(\cdot))$,
$\delta_4=\delta_4(w,p(\cdot))\in (0,1)$
and $C>0$ such that 
\begin{equation}
\frac{ \|\chi_{k} \|_{L^{p(\cdot)}(w)}}{\|\chi_{l} \|_{L^{p(\cdot)}(w)}}
\sim
\frac{ \|\chi_{k} \|_{ (L^{p'(\cdot)}(w^{-p(\cdot)/p'(\cdot)}))' }}
{\|\chi_{l} \|_{ (L^{p'(\cdot)}(w^{-p(\cdot)/p'(\cdot)}))' }}
 \le
C \left( \frac{|C_k|}{|C_l|} \right)^{\delta_1},
\label{3-13-5}
\end{equation}
that
\begin{equation}
\frac{ \|\chi_{k} \|_{(L^{p(\cdot)}(w))'}}{\|\chi_{l} \|_{(L^{p(\cdot)}(w))'}} \le
C \left( \frac{|C_k|}{|C_l|} \right)^{\delta_2} \label{3-13-6}
\end{equation}
that
\begin{equation}
\chi_{{\mathbb N} \times {\mathbb N}}(k,l)
\frac{ \|\chi_{k} \|_{L^{p(\cdot)}(w)}}{\|\chi_{l} \|_{L^{p(\cdot)}(w)}}
 \le
C \left( \frac{|C_k|}{|C_l|} \right)^{\delta_3},
\label{3-13-5a}
\end{equation}
and that
\begin{equation}
\chi_{{\mathbb N} \times {\mathbb N}}(k,l)\frac{ \|\chi_{k} \|_{(L^{p(\cdot)}(w))'}}{\|\chi_{l} \|_{(L^{p(\cdot)}(w))'}} 
\le
C \left( \frac{|C_k|}{|C_l|} \right)^{\delta_4} \label{3-13-6a}
\end{equation}
for all $k,l\in\mathbb{Z}$ with $k\le l$. 
Finally, assume that $\alpha(\cdot)$ satisfies
\begin{equation}\label{eq:230708-12}
-n\delta_1<v^-\alpha_-
\end{equation}
 and 
\begin{equation}\label{eq:230708-13}
v^+\alpha_+<n\delta_2.
\end{equation}
Then
in \cite{INS24}
the authors showed that there exists a constant $C>0$ such that
$$
\| {\mathcal R}_m f\|_{\dot{K}^{\alpha(\cdot), q(\cdot)}_{p(\cdot)}(v, w)}
\le
C \| f\|_{\dot{K}^{\alpha(\cdot), q(\cdot)}_{p(\cdot)}(v, w)} 
$$
for 
all $f\in L^\infty_{\rm c}({\mathbb R}^n)$
and
$m=1,2,\ldots,n$.
We have an analogy to the non-homogeneous space 
$K^{\alpha(\cdot), q(\cdot)}_{p(\cdot)}(v, w)$
if we replace
$(\ref{eq:230708-12})$
and
$(\ref{eq:230708-13})$
by
\begin{equation}\label{eq:230708-12k}
-n\delta_3<v^-\alpha_-
\end{equation}
 and 
\begin{equation}\label{eq:230708-13k}
v^+\alpha_+<n\delta_4.
\end{equation}

Hence,
Theorems \ref{thm:241102-1a}
and \ref{thm:241102-1ab}
are applicable to 
$\dot{K}^{\alpha(\cdot), q(\cdot)}_{p(\cdot)}(v, w)$
and
$K^{\alpha(\cdot), q(\cdot)}_{p(\cdot)}(v, w)$ in this case.

\subsection{Orlicz spaces}

First, let us recall the notion of Young functions.
A function $\Phi:[0,\infty) \to [0,\infty)$
is a {\it Young function},
if it satisfies the following conditions{\rm:}
\begin{enumerate}
\item[$(1)$]
\label{enum:190515-64}
$\Phi(0)=0$.
\item[$(2)$]
\label{enum:190515-65}
$\Phi$ is continuous.
\item[$(3)$]
\label{enum:190515-66}
$\Phi$ is convex.
That is,
$
\Phi((1-\theta)t_{1}+\theta t_{2})
\le
(1-\theta)\Phi(t_{1})+\theta\Phi(t_{2})
$
for all $t_{1},t_{2} \in [0,\infty)$ and $0<\theta<1$.
\end{enumerate}
Next, we recall the definition of Orlicz spaces
which
a Young function
$\Phi:[0,\infty) \to [0,\infty)$ 
generates.
We define
the {\it Luxemburg--Nakano norm}
$\|\cdot\|_{L^\Phi}$ by
\[
\| f \|_{L^\Phi}
:=
\inf\left(\left\{ \lambda \in(0,\infty) \, : \,
\int_{{\mathbb R}^n} \Phi\left(\frac{|f(x)|}{\lambda}\right) {\rm d}x 
\le 1 
\right\} \cup \{\infty\}\right)
\]
for $f \in L^0({\mathbb R}^n)$.
The {\it Orlicz space} $L^\Phi({\mathbb R}^n)$ over ${\mathbb R}^n$
is the set of
all $f \in L^0({\mathbb R}^n)$
for which $\| f \|_{L^\Phi}$ is finite.

In order that $M$ is bounded on both $L^\Phi({\mathbb R}^n)$ and on its K\"{o}the dual,
we recall the following classes.
\begin{enumerate}
\item[$(1)$]
\label{enum:190515-82}
A function $\varphi:(0,\infty) \to [0,\infty)$
or $\varphi:[0,\infty) \to [0,\infty)$
is a doubling function,
\index{doubling function@doubling function}
if there exists a constant $C>0$,
called a {\it doubling constant},
\index{doubling constant@doubling constant}
such that $C^{-1}\varphi(t) \le \varphi(s) \le C\varphi(t)$
for all $t$ and $s$ in the domain of $\varphi$
satisfying $s \le t \le 2s$.
\index{doubling function@doubling function}
\item[$(2)$]
\label{enum:190515-83}
Denote by $\Delta_{2}$ the set of all convex bijections
$\Phi:[0,\infty) \to [0,\infty)$
satisfying the doubling condition;
$\Phi(2r) \lesssim \Phi(r)$ for $r>0$.
The implicit constant is again called a
{\it doubling constant}.
In this case $\Phi$ also satisfies
the {\it $\Delta_{2}$-condition}.
\item[$(3)$]
\label{enum:190515-84}
The set $\nabla_{2}$ is the set of all convex bijections
$\Phi:[0,\infty) \to [0,\infty)$
such that there exists $C>1$ such that
$\Phi(2t) \ge 2C\Phi(t)$
for all $t \ge 0$.
In this case one says that $\Phi$ satisfies
the $\nabla_{2}$-{\it condition}.
\end{enumerate}
If $\Phi \in \Delta_2 \cap \nabla_2$,
then
$M$ is bounded on
 both $L^\Phi({\mathbb R}^n)$ and on its K\"{o}the dual.
 See
 \cite[Theorem 53]{book}
 and
\cite[Example 71]{book}.
Thus,
Theorems \ref{thm:241102-1a}
and \ref{thm:241102-1ab}
are applicable to 
$L^\Phi({\mathbb R}^n)$.
See \cite{Soardi} for wavelet characterization of Orlicz spaces.

\subsection{Morrey spaces}

%Denote by $B(x,R)$ the open ball centered at $x \in \mathbb{R}^n$ of radius $R>0$.
Let $1\le r \le r_0 < \infty$.
For an $L^{r}_{\rm loc}({\mathbb R}^n)$-function $f$,
its (classical) Morrey norm is defined by
\begin{equation}\label{eq:130709-1A}
\| f \|_{{\mathcal M}^{r_0}_r}
:=
\sup_{x \in {\mathbb R}^n, R>0}
|B(x,R)|^{\frac{1}{r_0}-\frac{1}{r}}
\left(
\int_{B(x,R)}|f(y)|^{r}{\rm d}y
\right)^{\frac{1}{r}}.
\end{equation}
The Morrey space 
${\mathcal M}^{r_0}_r({\mathbb R}^n)$
is the set of all $f \in L^{r}_{\rm loc}({\mathbb R}^n)$
for which the norm $\| f \|_{{\mathcal M}^{r_0}_r}$ is finite.
This is a natural extension of $L^{r_0}({\mathbb R}^n)$
since ${\mathcal M}^{r_0}_{r_0}({\mathbb R}^n)$
and $L^{r_0}({\mathbb R}^n)$ are the same with coincidence of norms.

Let $1<r \le r_0<\infty$.
Then $M$ and ${\mathcal R}_j$, $j=1,2,\ldots,n$,
are bounded
on
${\mathcal M}^{r_0}_r({\mathbb R}^n)$.
See \cite{ChFr87}.
Thus,
Theorems \ref{thm:241102-1a}
and \ref{thm:241102-1ab}
are applicable to 
${\mathcal M}^{r_0}_r({\mathbb R}^n)$.

Let $1<r<r_0<\infty$.
Remark that the Morrey space ${\mathcal M}^{r_0}_r({\mathbb R}^n)$
is not separable but that
$M$ is bounded on 
${\mathcal M}^{r_0}_r({\mathbb R}^n)$ 
\cite[Theorem 382]{book}
and
${\mathcal M}^{r_0}_r({\mathbb R}^n)'$
\cite{SaTa09-2}. 
Thus,
in general
Morrey spaces do not fall within the scope of
Theorem \ref{thm:241226-1}.
Remark that
${\mathcal M}^{r_0}_r({\mathbb R}^n)$
fails to be reflexive \cite[Example 148]{book}.
Although
$L^{r_0}({\mathbb R}^n)$ is a subset
of ${\mathcal M}^{r_0}_r({\mathbb R}^n)$,
it fails to be dense \cite[Theorem 20]{book},
it is still possible to obtain the characterization
of Morrey spaces in terms of wavelets
as in \cite{Sawano08}.

\subsection{Besov--Bourgain--Morrey spaces}

Let $1 \le q \le p<\infty$ and $r,\tau\in [1,\infty]$.
The Besov-Bourgain--Morrey space 
$B{\mathcal M}^{p,\tau}_{q,r}({\mathbb R}^n)$
is defined to be the
set of all the
$f \in L^q_{\rm loc}({\mathbb R}^n)$
satisfying that
\begin{align*}
\|f\|_{B{\mathcal M}^{p,\tau}_{q,r}({\mathbb R}^n)}
:=
\left\|
\left\{
\left\|
\left\{
|Q_{\nu ,m}|^{\frac{1}{p}-\frac{1}{q}}
\left[\int_{Q_{\nu ,m}}|f(y)|^q{\rm d}y\right]^{\frac{1}{q}}
\right\}_{m \in {\mathbb Z}^n}
\right\|_{\ell^r({\mathbb Z}^n_m)}
\right\}_{\nu \in {\mathbb Z}}
\right\|_{\ell^\tau({\mathbb Z}_\nu)}<\infty.
\end{align*}
Here the inner $\ell^r$ norm is taken with respect to $m$
and the outer $\ell^\tau$ norm is taken with respect to $\nu$. 
See \cite{zyys24} for more.
The Bourgain--Morrey space ${\mathcal M}^p_{q,r}({\mathbb R}^n)$
is
the Besov--Bourgain--Morrey space $B{\mathcal M}^{p,r}_{q,r}({\mathbb R}^n)$
\cite{hnsh22}.
If
$1<q<p<r<\infty$
and
$1\le \tau<\infty$
or if
$1<q<p\le r<\tau=\infty$,
then $M$ and ${\mathcal R}_j$, $j=1,2,\ldots,n$,
are bounded
on
$B{\mathcal M}^{p,\tau}_{q,r}({\mathbb R}^n)$.
Thus,
Theorems \ref{thm:241102-1a}
and \ref{thm:241102-1ab}
are applicable to 
$B{\mathcal M}^{p,\tau}_{q,r}({\mathbb R}^n)$.

Remark that Besov--Bourgain--Morrey spaces fall within the scope of
Theorem \ref{thm:241226-1}
unlike
Morrey spaces.

\section{Appendix--Proof of Theorem \ref{thm:241102-1ab}}
\label{s2}

Let $\varepsilon > 0$ and $(f, g) \in \mathcal{F}_{{\rm loc}}$. It suffices to show that
\begin{equation}\label{eq:241101-1}
\|f\|_{X} \le C_1 \|g\|_X + C_2 \varepsilon \|f\|_X,
\end{equation}
where $C_1 > 0$ depends on $\varepsilon$, while $C_2 > 0$ is independent of $\varepsilon$ and the pair $(f, g)$.

In fact, for any $r > 0$, since $(\chi_{B(r)} \chi_{[0, r]}(|f|) f, g) \in \mathcal{F}_{{\rm loc}}$, we have
\[
\|\chi_{B(r)} \chi_{[0, r]}(|f|) f\|_{X} \le C_1 \|g\|_X + C_2 \varepsilon \|\chi_{B(r)} \chi_{[0, r]}(|f|) f\|_X
\]
once we establish \eqref{eq:241101-1}. Note that $\chi_{B(r)} \chi_{[0, r]}(|f|) f \in X$,
since $L^\infty_{\rm c}({\mathbb R}^n) \subset X$. 
By taking $\varepsilon = \frac{1}{2C_2}$, we obtain
\[
\|\chi_{B(r)} \chi_{[0, r]}(|f|) f\|_{X} \le 2C_1 \|g\|_X.
\]
Letting $r \to \infty$, we obtain the desired result.
Thus, we may assume that $f \in L^\infty_{\rm c}(\mathbb{R}^n)$.

We now prove \eqref{eq:241101-1}. Since $(f, g) \in \mathcal{F}_{{\rm loc}}$ if and only if $(|f|, |g|) \in \mathcal{F}_{{\rm loc}}$ for any $f, g \in L^0(\mathbb{R}^n)$, we may assume that $f, g$ are non-negative from now on. %If $g=0$, then $f=0$. Thus, we may assume $g$ is not a function that is zero almost everywhere.

Furthermore, by adding a small constant multiple of 
\[
L:=\sum_{l=0}^\infty \frac{1}{(2\|M_{\mathrm{loc}}\|_{X' \to X'})^l}M_{\mathrm{loc}}^l\chi_{B(1)}
\] to $g$, we may assume that $g$ is positive everywhere. Finally, we may assume $g \in X$; otherwise, the conclusion is trivial.

\medskip

Define, for any measurable function $k$,
\[
R_k(x) := \sum_{l=0}^\infty \frac{M_{\mathrm{loc}}^l k(x)}{(\alpha \|M_{\mathrm{loc}}\|_{X \to X})^l}, \quad
R'_k(x) := \sum_{l=0}^\infty \frac{M_{\mathrm{loc}}^l k(x)}{(\alpha \|M_{\mathrm{loc}}\|_{X' \to X'})^l}, \quad x \in \mathbb{R}^n,
\]
where $\alpha \ge 2$ is a constant to be fixed later (see \eqref{eq:250114-2}).

We will use the inequality
\begin{equation}\label{eq:250105-1}
ab^{p-1} \le C_\varepsilon a^p + \varepsilon b^p \quad (a, b, \varepsilon > 0),
\end{equation}
where $C_\varepsilon > 0$ depends only on $\varepsilon$ and $p$.

To estimate $\|f\|_X$, fix a non-negative function $h \in X'$ with $\|h\|_{X'} < 1$ to dualize. It suffices to show that there exist constants $C_0$ (independent of $\varepsilon$, $f$, $g$, and $h$) and $C_\varepsilon$ (dependent on $\varepsilon$ but not on $f$, $g$, or $h$) such that
\begin{equation}\label{eq:250105-10}
\int_{\mathbb{R}^n} f(x) h(x){\rm d}x \le C_\varepsilon \|g\|_X + C_0 \varepsilon \|f\|_X.
\end{equation}
Indeed, since $X = (X')'$ as in \cite{book}
and $f \in L^\infty_{\rm c}(\mathbb{R}^n) \subset X$, this implies \eqref{eq:241101-1}.

We may also assume that $h$ is positive by adding a small constant multiple of $L$ if necessary. So from here on, assume $f \in L^\infty_{\rm c}(\mathbb{R}^n)$ and $g, h > 0$.

\medskip

Assuming $f \ge 0$, we observe that $R_{g+f} > 0$ and $R'_h > 0$. Moreover, we have
\[
[R_{g+f}]_{A_{1,\rm loc}} \le \alpha \|M_{\mathrm{loc}}\|_{X \to X}, \quad
[R'_h]_{A_{1,\rm loc}} \le \alpha \|M_{\mathrm{loc}}\|_{X' \to X'}.
\]
Hence,
\begin{equation}\label{eq:250114-1}
[R_{g+f}^{1-p} R'_h]_{A_{p,\rm loc}}
\le [R_{g+f}]_{A_{1,\rm loc}}^{p-1} [R_{g+f}^{1-p} R'_h]_{A_{1,\rm loc}}
\lesssim 1,
\end{equation}
with the implicit constant independent of $f$ and $g$.

Since $h \le R'_h$, we get
\begin{align}\label{eq:250105-11}
\int_{\mathbb{R}^n} f(x) h(x){\rm d}x
\le \int_{\mathbb{R}^n} f(x) R'_h(x){\rm d}x 
= \int_{\mathbb{R}^n} f(x) R_{g+f}(x)^{p-1} \cdot R_{g+f}(x)^{1-p} R'_h(x){\rm d}x.
\end{align}
Applying \eqref{eq:250105-1}, we obtain
\begin{align}\label{eq:250105-12}
&\int_{\mathbb{R}^n} f(x) R_{g+f}(x)^{p-1} R_{g+f}(x)^{1-p} R'_h(x){\rm d}x 
\nonumber\\
&\le C_\varepsilon \int_{\mathbb{R}^n} f(x)^p R_{g+f}(x)^{1-p} R'_h(x){\rm d}x
+ \varepsilon \int_{\mathbb{R}^n} R_{g+f}(x)^p R_{g+f}(x)^{1-p} R'_h(x){\rm d}x.
\end{align}

We estimate the first term. Since $(f, g) \in \mathcal{F}_{{\rm loc}}$, we have
\begin{align}\label{eq:250105-13}
\int_{\mathbb{R}^n} f(x)^p R_{g+f}(x)^{1-p} R'_h(x){\rm d}x
\nonumber
&\le N \left([R_{g+f}^{1-p} R'_h]_{A_{p,\rm loc}}\right)^p
\int_{\mathbb{R}^n} g(x)^p R_{g+f}(x)^{1-p} R'_h(x){\rm d}x \\
&\lesssim \int_{\mathbb{R}^n} g(x)^p R_{g+f}(x)^{1-p} R'_h(x){\rm d}x,
\end{align}
by \eqref{eq:250114-1}. Since $R_{g+f} \ge g$, it follows that
\begin{equation}\label{eq:250105-14}
\int_{\mathbb{R}^n} g(x)^p R_{g+f}(x)^{1-p} R'_h(x){\rm d}x
\le \int_{\mathbb{R}^n} g(x) R'_h(x){\rm d}x.
\end{equation}
Applying H$\mathrm{\ddot{o}}$lder's inequality and the boundedness of $R'_h$ on $X'$, we obtain
\begin{equation}\label{eq:250105-15}
\int_{\mathbb{R}^n} g(x) R'_h(x){\rm d}x \lesssim \|g\|_X \|R'_h\|_{X'} \lesssim \|g\|_X \|h\|_{X'} \lesssim \|g\|_X.
\end{equation}
Putting together \eqref{eq:250105-13}--\eqref{eq:250105-15}, we find
\begin{equation}\label{eq:250105-16}
\int_{\mathbb{R}^n} f(x)^p R_{g+f}(x)^{1-p} R'_h(x){\rm d}x \lesssim \|g\|_X.
\end{equation}

We handle the second term
 using the inequality
\[
\left(\sum_{l=1}^\infty a_l\right)^p \le \sum_{l=1}^\infty 2^{p l} a_l^p
\]
for non-negative sequences $\{a_l\}_{l=1}^\infty$. Applying this to $R_{g+f}$, we get
\begin{align}\label{eq:250105-21}
\int_{\mathbb{R}^n} R_{g+f}(x)R'_h(x){\rm d}x
&=
\int_{\mathbb{R}^n} R_{g+f}(x)^p R_{g+f}(x)^{1-p} R'_h(x){\rm d}x
\nonumber\\
&\le 2^p \sum_{l=0}^\infty \frac{2^{p l}}{(\alpha \|M_{\mathrm{loc}}\|_{X \to X})^{p l}}
\int_{\mathbb{R}^n} M_{\mathrm{loc}}^l[f+g](x)^p R_{g+f}(x)^{1-p} R'_h(x){\rm d}x.
\end{align}
We abbreviate
\[
W := R_{g+f}^{1-p} R'_h \in A_{p,\rm loc}.
\]
If we argue using the dyadic grids and using the idea of Lerner
\cite{Lerner08},
we find a constant $\beta > 1$ such that
\[
\int_{\mathbb{R}^n} M_{\mathrm{loc}} F(x)^p W(x) \, \mathrm{d}x 
\le \beta [W]_{A_{p,\rm loc}}^{p'} \int_{\mathbb{R}^n} |F(x)|^p W(x) \, \mathrm{d}x
\]
for all measurable functions $F$.
Applying this estimate iteratively, we obtain
\begin{align}\label{eq:250105-22}
\int_{\mathbb{R}^n} M_{\mathrm{loc}}^l [f + g](x)^p W(x) \, \mathrm{d}x 
\le (\beta [W]_{A_{p,\rm loc}}^{p'})^l \int_{\mathbb{R}^n} (f(x) + g(x))^p W(x) \, \mathrm{d}x.
\end{align}

Assume
\[
\alpha > \frac{2 \left( \beta [W]_{A_{p,\rm loc}}^{p'} \right)^{1/p}}{\|M_{\mathrm{loc}}\|_{X \to X}}.
\]
Then we have
\begin{equation}\label{eq:250114-2}
\sum_{l=0}^\infty 
\frac{2^{pl} \left( \beta [W]_{A_{p,\rm loc}}^{p'} \right)^l}{\left( \alpha \|M_{\mathrm{loc}}\|_{X \to X} \right)^{pl}} 
< \infty.
\end{equation}

Since $R_{g+f} \ge f + g$, it follows from \eqref{eq:250105-22} and related estimates that
\begin{align}\label{eq:250105-23}
\int_{\mathbb{R}^n} R_{g+f}(x)^p R_{g+f}(x)^{1-p} R'_h(x) \, \mathrm{d}x
&\lesssim \int_{\mathbb{R}^n} (f(x) + g(x)) R'_h(x) \, \mathrm{d}x 
\lesssim \|f\|_X + \|g\|_X.
\end{align}

Combining \eqref{eq:250105-12}, \eqref{eq:250105-16}, and \eqref{eq:250105-23}, we obtain \eqref{eq:250105-10}, and hence the desired estimate \eqref{eq:241101-1} is proved.
This completes the proof of Theorem \ref{thm:241102-1ab}.

\section*{Acknowledgement}

This work was partly supported by MEXT Promotion of Distinctive Joint Research Center Program JPMXP0723833165.
The authors are thankful to Professors Emiel Lorist
and David Cruz-Uribe for their remarks on Theorems \ref{thm:241102-1a}
and \ref{thm:241102-1ab} as well as the remark on weighted Sobolev spaces.
\section*{Declarations}

\begin{itemize}
\item Funding:
Yoshihiro Sawano was supported by Grant-in-Aid for Scientific Research (C) (19K03546), 
the Japan Society for the Promotion of Science.
\item Conflict of interest/Competing interests (check journal-specific guidelines for which heading to use)
Not applicable
\item Ethics approval and consent to participate:
Not applicable
\item Consent for publication:
Not applicable
\item Data availability:
Not applicable
\item Materials availability:
Not applicable
\item Code availability:
Not applicable
\item Author contribution:
The three authors contributed equally to the correctness of this article.
\end{itemize}

\end{document}